\tikzset{
sedge/.append style={shorten <=9pt, shorten >=9pt}
}
\tikzset{
  @arc through/.style 2 args={
    to path={
      \pgfextra
        \pgfextract@process\pgf@tostart{\tikz@scan@one@point\pgfutil@firstofone(\tikztostart)\relax}%
        \pgfextract@process\pgf@tothrough{\tikz@scan@one@point\pgfutil@firstofone#1}%
        \pgfextract@process\pgf@totarget{\tikz@scan@one@point\pgfutil@firstofone(\tikztotarget)\relax}%
        \pgfextract@process\pgf@topointMidA{\pgfpointlineattime{.5}{\pgf@tostart}{\pgf@tothrough}}%
        \pgfextract@process\pgf@topointMidB{\pgfpointlineattime{.5}{\pgf@totarget}{\pgf@tothrough}}%
        \pgfextract@process\pgf@tocenter{%
          \pgfpointintersectionoflines{\pgf@topointMidA}
            {\pgfmathrotatepointaround{\pgf@tothrough}{\pgf@topointMidA}{90}}
            {\pgf@topointMidB}{\pgfmathrotatepointaround{\pgf@tothrough}{\pgf@topointMidB}{90}}}%
        \pgfcoordinate{arc through center}{\pgf@tocenter}%
        \pgfpointdiff{\pgf@tocenter}{\pgf@tostart}%
        \pgfmathveclen@{\pgfmath@tonumber\pgf@x}{\pgfmath@tonumber\pgf@y}%
        \edef\pgf@toradius{\pgfmathresult pt}
        \pgfmathanglebetweenpoints{\pgf@tocenter}{\pgf@tostart}%
        \let\pgf@tostartangle\pgfmathresult
        \pgfmathanglebetweenpoints{\pgf@tocenter}{\pgf@totarget}%
        \let\pgf@toendangle\pgfmathresult
        \ifdim\pgf@tostartangle pt>\pgf@toendangle pt\relax
          \pgfmathsetmacro\pgf@tostartangle{\pgf@tostartangle-360}%
        \fi
        #2%
          \pgfmathsetmacro\pgf@toendangle{\pgf@toendangle-360}%
        \fi
      \endpgfextra
      arc [radius=+\pgf@toradius, start angle=\pgf@tostartangle, end angle=\pgf@toendangle] \tikztonodes
    }},
  arc through ccw/.style={@arc through={#1}{\iffalse}},
  arc through cw/.style={@arc through={#1}{\iftrue}},
}
\def\th@plain{\slshape}                                        %
\newcommand{\Cbb}{\mathbb{C}}
\newcommand{\Obb}{\mathbb{O}}
\newcommand{\Qbb}{\mathbb{Q}}
\newcommand{\Rbb}{\mathbb{R}}
\newcommand{\Zbb}{\mathbb{Z}}
\newcommand{\Bcal}{\mathcal{B}}
\newcommand{\Hcal}{\mathcal{H}}
\newcommand{\Rcal}{\mathcal{R}}
\newcommand{\Afrak}{\mathfrak{A}}
\newcommand{\abf}{\mathbf{a}}
\newcommand{\gel}{\gtreqqless}
\newcommand{\m}{^{-1}}
\newcommand{\abs}[1]{\lvert#1\rvert}
\newcommand{\newword}[1]{\textsl{#1}}
\newcommand{\angles}[1]{\langle #1 \rangle}
\newcommand{\floor}[1]{\lfloor #1 \rfloor}
\newcommand{\set}[1]{\{ #1 \}}
\newcommand{\cvect}[2]{\bigl(\begin{smallmatrix}#1\\#2\end{smallmatrix}\bigr)}
\newcommand{\bbmatrix}[4]{\bigl[\begin{smallmatrix}#1&#2\\#3&#4\end{smallmatrix}\bigr]}
\DeclareMathSymbol{\upharpoonright}{\mathrel}{AMSa}{"16}
\DeclareMathOperator{\PP}{P}
\DeclareMathOperator{\PSL}{PSL}
\DeclareMathOperator{\tr}{tr}
\DeclareMathOperator{\Mat}{Mat}
\DeclareMathOperator{\re}{Re}
\DeclareMathOperator{\sgn}{sgn}
\DeclareMathOperator{\CR}{CR}
\theoremstyle{plain}
\newtheorem{theorem}{Theorem}[section]
\newtheorem{lemma}[theorem]{Lemma}
\theoremstyle{definition}
\newtheorem{definition}[theorem]{Definition}
\newtheorem{remark}[theorem]{Remark}
\newtheorem{example}[theorem]{Example}
\newtheorem{notation}[theorem]{Notation}
\begin{document}

\bibliographystyle{plain}

\sloppy

\title[Decreasing height]{Decreasing height\\
along continued fractions}

\author[]{Giovanni Panti}
\address{Department of Mathematics\\
University of Udine\\
via delle Scienze 206\\
33100 Udine, Italy}
\email{giovanni.panti@uniud.it}

\begin{abstract}
The fact that the euclidean algorithm eventually terminates is pervasive in mathematics. In  the language of continued fractions, it can be stated by saying that the orbits of rational points under the Gauss map $x\mapsto x\m-\floor{x\m}$ eventually reach zero. Analogues of this fact for Gauss maps defined over quadratic number fields have relevance in the theory of flows on translation surfaces, and have been established via powerful machinery, ultimately relying on the Veech dichotomy.
In this paper, for each commensurability class of noncocompact triangle groups of quadratic invariant trace field, we construct a Gauss map whose defining matrices generate a group in the class; we then provide a direct and self-contained proof of termination.
As a byproduct, we provide a new proof of the fact that noncocompact triangle groups of quadratic invariant trace field have the projective line over that field as the set of cross-ratios of cusps.

Our proof is based on an analysis of the action of nonnegative matrices with quadratic integer entries on the Weil height of points. As a consequence of the analysis, we show that long symbolic sequences in the alphabet of our maps can be effectively split into blocks of predetermined shape having the property that the height of points which obey the sequence and belong to the base field decreases strictly at each block end.
Since the height cannot decrease infinitely, the termination property follows.
\end{abstract}

\thanks{\emph{2010 Math.~Subj.~Class.}: 11A55; 37P30.}

\maketitle

\section{Introduction}\label{ref1}

The ordinary continued fractions algorithm and its variants (Ceiling fractions, Centered, Odd, Even, Farey, $\ldots$, see~\cite{baladivallee05}) can be seen as the action of a Gauss map on an interval in $\PP^1\Rbb$. Here, by a \newword{Gauss map} we mean a piecewise-projective map~$T$, all of whose pieces have the form $T(x)=\frac{ax+b}{cx+d}$ for some matrix $\bbmatrix{a}{b}{c}{d}$ in the extended modular group $\PSL_2^\pm\Zbb$, \newword{extended} referring to the fact that matrices of determinant $-1$, as well as $+1$, are allowed.
As remarked in~\cite[p.~36]{arnouxhubert00}, this setting has two natural generalizations, obtained by replacing $\PSL_2^\pm\Zbb$ either with $\PSL_n^\pm\Zbb$ (\newword{multidimensional continued fractions}, \cite{schweiger00}, \cite{berthe11}), or with the \newword{extended Hilbert modular group} $\PSL_2^\pm\Obb$ for some totally real ring of algebraic integers $\Obb$; in this paper we are concerned with the latter.

Given a c.f.~algorithm, that we identify with the corresponding map $T$, 
we define~$\Gamma^\pm_T$ as the subgroup of $\PSL_2^\pm\Rbb$ generated by the matrices defining~$T$. Algorithms generating the same group ---or, more generally, commensurable groups--- should be considered close cousins. As maps conjugate by a projective transformation can be identified, commensurability is to be intended in the wide sense: two subgroups $\Gamma,\Gamma'$ of $\PSL_2^\pm\Rbb$ are \newword{commensurable} if there exists $C\in\PSL_2^\pm\Rbb$ such that $C\Gamma C\m\cap\Gamma'$ has finite index both in $C\Gamma C\m$ and in $\Gamma'$.
Since a certain degree of rigidity is required, we have restrictions on the allowed $\Gamma^\pm_T$'s. 
One such restriction is the requirement that any unimodular interval (see~\S\ref{ref2} for definitions) be mapped to any other in at least one and at most two ways, one of them being order-preserving and the other order-reversing.
Other restrictions arise ---although we do not actually use flows in the
present work--- from the desire of preserving the classical connection between c.f.~and flows on the modular surface~\cite{series85}, \cite[Chapter~9]{einsiedlerward}. We then make the standing assumption that 
$\Gamma^\pm_T$ acts on the hyperbolic plane $\Hcal$ in a properly discontinuous way, and that the resulting fuchsian group
$\Gamma_T=\Gamma^\pm_T\cap\PSL_2\Rbb$ be of the first kind.
We will see in Theorem~\ref{ref15} that this assumption yields, as a side effect, that points are uniquely defined by their symbolic orbits.

When $\Gamma_T$ has invariant trace field~$\Qbb$ (see~\S\ref{ref2} for definitions), then it is commensurable with $\PSL_2\Zbb$; this is the arithmetic case, which is well understood. In this paper we take advantage of the recent classification~\cite{nugentvoight17} of hyperbolic triangle groups in terms of their arithmetic dimension (namely, the number of split real places of the associated quaternion algebra) to introduce continued fractions with good properties for every commensurability class of noncocompact triangle groups of arithmetic dimension $2$. In~\S\ref{ref2}, after a brief review of basic concepts and notation, we apply Margulis's rigidity theorem to refine the classification in~\cite{nugentvoight17} to a classification into commensurability classes. It turns out that the set of $16$ noncocompact triangle groups of arithmetic dimension $2$ is split into $4$ subsets according to the invariant trace field
$F_2\in\set{\Qbb(\sqrt2),\Qbb(\sqrt3),\Qbb(\sqrt5),\Qbb(\sqrt6)}$,
and further split into~$9$ commensurability classes.
In~\S\ref{ref3} we introduce a c.f.~algorithm for each of these classes. Our algorithms extend previous work; the algorithm for the class containing the Hecke group $\Delta(2,8,\infty)$ coincides with the octagon Farey map in~\cite{smillieulcigrai11}, \cite{smillieulcigrai10}, which in turn is a ``folded version'' of the map in~\cite{arnouxhubert00}.
In~\cite{caltaschmidt12} continued fractions ---different from ours--- generating the groups $\Delta(3,m,\infty)$ (which are related to the Veech surfaces in~\cite{ward98}, \cite[\S6.1]{bouwmoller10}) are introduced; these triangle groups are $2$-arithmetic precisely for $m=4,5,6$.

Let $T$ be one of our algorithms.
The following is an informal version of the good property referred to above; we will formulate a precise version in Theorem~\ref{ref20}.
\begin{itemize}
\item[(H)] All entries of all matrices defining $T$ are in the integer ring of the invariant trace field $F_2$ of the commensurability class of $\Gamma_T$. The $T$-symbolic orbit of every $\alpha\in F_2$ can be split into 
finite blocks of predetermined shape having the property that the  height of 
the points in the $T$-orbit of~$\alpha$
decreases strictly at each block end. This block-splitting process stops only when the symbolic orbit of a parabolic fixed point is reached.
\end{itemize}
Since by the Northcott theorem~\cite[Theorem~1.6.8]{bombierigubler06} the height of points cannot decrease infinitely, property~(H) implies that the $T$-orbit of any point of $F_2$ must end up in a parabolic fixed point.
This fact is far from granted; in~\cite[Theorem 1]{arnouxschmidt09} it is proved that it cannot hold for the Hecke groups $\Delta(2,m,\infty)$ unless the degree of the invariant trace field is $1$ or $2$ (see also the beginning of~\S\ref{ref5} of the present paper), and it is conjectured that the same degree restriction holds for every Veech group. Note that a finitely generated nonelementary fuchsian group can be realized as a Veech group only if its trace and invariant trace fields coincide~\cite[Lemma~10.1]{hooper13a}.

Property (H) can be seen from various points of view; from the geometric side it implies that all points of $F_2$ are cusps of $\Gamma_T$ (see Theorem~\ref{ref22}), while from the dynamical side it means that all directions of vanishing SAF invariant are parabolic (see~\cite{arnouxschmidt09} for definitions and references). These facts are known in degree $2$, but the proofs are involved. The geometric fact was proved in~\cite[Satz~2]{leutbecher74} for the specific case of the Hecke groups $\Delta(2,m,\infty)$ ($m=5,8,10,12$), and the dynamic fact in~\cite[Appendix]{mcmullen03}, making crucial use of the Veech dichotomy~\cite{veech89}; this is indeed the approach used in~\cite[Theorem~2.3.3]{smillieulcigrai11}. Our proof is direct, algorithmic and self-contained, making no appeal to the Veech dichotomy.

In~\S\ref{ref4} we briefly review the basics of the Weil height. Then, in Theorem~\ref{ref13}, we characterize the matrices
$A\in\PSL_2^\pm\Obb$ ($\Obb$ being the integer ring of a real quadratic field $K$) which increase strictly the height of all points of $K$ contained in a given interval.
In~\S\ref{ref5} we introduce the notion of decreasing block, and use it to formulate property~(H) in a precise way. In the final Theorem~\ref{ref20} we prove that indeed all the algorithms introduced in~\S\ref{ref3} satisfy property~(H).

\subsubsection*{Acknowledgments} The main idea behind Theorem~\ref{ref13} was suggested to me by Umberto Zannier, whom I very sincerely thank. I also thank John Voight for a clarifying discussion on the results in~\cite{nugentvoight17}, and the referee for the detailed reading and valuable suggestions. Some of the pictures and a few computations are done through SageMath.

\section{Triangle groups and commensurability}\label{ref2}

A matrix $A=\bbmatrix{a}{b}{c}{d}\in\PSL^\pm_2\Rbb$ acts on $z\in\PP^1\Cbb$ in the
standard way: $A*z$ equals $\frac{az+b}{cz+d}$ if $\det(A)=1$, and equals 
$\frac{a\bar z+b}{c\bar z+d}$ if $\det(A)=-1$. We use square brackets for matrices to emphasize that we are actually considering their classes in the projective group.
The above action preserves the upper-half plane $\Hcal$ and its boundary $\PP^1\Rbb$.

The \newword{trace field} of a fuchsian group~$\Gamma$ is the field $F_1=\Qbb(\tr[\Gamma])$ generated over the rationals by the traces of the elements of $\Gamma$.
An element $A\in\Gamma$ is \newword{parabolic} if its trace has absolute value~$2$; every $\alpha\in\PP^1\Rbb$ fixed by a parabolic element of~$\Gamma$ is a \newword{cusp} of~$\Gamma$. Two fuchsian groups intersecting in a subgroup of finite index in both have the same set of cusps (of course, the two groups may partition this common set of cusps in group orbits in different ways).
Fuchsian groups of finite covolume are automatically
of the first kind, geometrically finite and finitely generated; see~\cite[Chapter~4]{katok92} for definitions and proofs.

\begin{notation}
We let\label{ref21} $K$ be a totally real number field of degree $d\ge1$, with ring of algebraic integers $\Obb$. We denote by $\Gamma^\pm$ any subgroup of 
the extended Hilbert modular group
$\PSL^\pm_2\Obb$ such that:
\begin{itemize}
\item[(i)] $\Gamma^\pm$ acts on $\Hcal$ in a properly discontinuous way (i.e., for each point $\alpha$ and each compact $C$ the set of elements of $\Gamma^\pm$ mapping $\alpha$ to $C$ is finite);
\item[(ii)] the fuchsian group $\Gamma=\Gamma^\pm\cap\PSL_2\Rbb$ is of finite covolume and has trace field $K$;
\item[(iii)] $0$, $1$, $\infty$ are cusps of $\Gamma$.
\end{itemize}
\end{notation}

A \newword{$\Gamma^\pm$-unimodular interval} is an interval
in $\PP^1\Rbb$ which
is the image $A*I_0$ of the base unimodular interval $I_0=[0,\infty]$ under some element $A$ of $\Gamma^\pm$.
Intervals are ordered ``counterclockwise'' with respect to the natural cyclic order of $\PP^1\Rbb$ (the one obtained by looking at $\PP^1\Rbb$ as the boundary of the Poincar\'e disk).
A \newword{$\Gamma^\pm$-unimodular partition} of $I_0$ is a finite family $I_1,\ldots,I_r$ (with $r\ge2$) of unimodular intervals such that $I_0=\bigcup_{1\le a\le r}I_a$, distinct $I_a$ intersect at most in a common endpoint, and the set of endpoints other than $\infty$ generates~$K$ as a vector space over~$\Qbb$. 
Unimodular partitions are always numbered in accordance with the above order, so that $0\in I_1$, $\infty\in I_r$, and $I_a$ shares a common endpoint with $I_{a+1}$, for $0\le a<r$.
Whenever $\Gamma^\pm$ is clear from the context, we say just unimodular interval and unimodular partition.

\begin{lemma}\label{ref7}
\begin{enumerate}
\item The only element of $\Gamma^\pm$ that fixes $I_0$ setwise and fixes $\set{0,\infty}$ pointwise is the identity. $\Gamma^\pm$ may or may not contain an element that fixes $I_0$ setwise and exchanges $0$ with $\infty$; if it does, then the element is unique and has the form $\bbmatrix{}{\alpha}{\alpha\m}{}$ (blank entries standing for $0$), which acts on $\Hcal$ as a reflection (i.e., M\"obius inversion) through the geodesic connecting $-\alpha$ with $\alpha$.
\item If $\Gamma^\pm$ contains a reflection as above then, for each pair $U,V$ of unimodular intervals, $\Gamma^\pm$ contains precisely two elements mapping $U$ to $V$, one of them being order-preserving and the other order-reversing. If $\Gamma^\pm$ does not contain such a reflection, then each unimodular $U$ can be mapped to each unimodular $V$ by precisely one element of $\Gamma^\pm$.
\end{enumerate}
\end{lemma}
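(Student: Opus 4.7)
Part (1) will split into two claims. For the first, I will use that any element of $\PSL_2^\pm\Obb$ fixing both $0$ and $\infty$ is diagonal. Writing $A=\bbmatrix{a}{}{}{d}$ with $ad=\pm 1$, the $\det=-1$ case gives $A*x=-a^2 x$ on reals, which sends positive reals to negative ones and so cannot preserve $I_0=[0,\infty]$; hence only $\det=1$ survives, where $A*z=a^2 z$. If $a^2\neq 1$ then $A$ is hyperbolic and stabilizes the cusp $\infty$, contradicting the purely-parabolic nature of cusp stabilizers in fuchsian groups of finite covolume. So $a=\pm 1$ and $A$ is the identity in $\PSL_2^\pm$. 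This step uses only Notation~\ref{ref21}(ii) (that $\Gamma$ has finite covolume) together with (iii) ($\infty$ being a cusp).

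For the second claim of part (1), any element of $\PSL_2^\pm\Obb$ swapping $0$ and $\infty$ is anti-diagonal, $A=\bbmatrix{}{b}{c}{}$. An analogous sign check rules out $\det=1$ (which would give $A*x=-b^2/x<0$ for $x>0$) and leaves $A=\bbmatrix{}{\alpha}{\alpha\m}{}$ with $\det=-1$, acting on $\Hcal$ as $z\mapsto\alpha^2/\bar z$. Its fixed locus in $\Hcal$ is the semicircle $|z|=|\alpha|$, i.e.\ the geodesic from $-\alpha$ to $\alpha$, confirming the M\"obius-inversion description. Uniqueness is immediate: if two such elements both existed, their quotient would fix $\set{0,\infty}$ pointwise and $I_0$ setwise, hence equal the identity by the first claim.

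Part (2) is then a short conjugation argument. Given unimodular intervals $U=P*I_0$ and $V=Q*I_0$ with $P,Q\in\Gamma^\pm$, the element $QP\m\in\Gamma^\pm$ sends $U$ to $V$. If $A\in\Gamma^\pm$ is any other such element, then $Q\m AP$ fixes $I_0$ setwise, so part (1) forces $Q\m AP\in\set{I,R}$ where $R=\bbmatrix{}{\alpha}{\alpha\m}{}$. This gives the at-most-two candidates $QP\m$ and $QRP\m$, with the second belonging to $\Gamma^\pm$ precisely when $R$ does. Since $\det(QRP\m)=-\det(QP\m)$ in $\PSL^\pm_2\Rbb$, these two elements have opposite sign of determinant; as det-$(+1)$ elements are precisely the orientation-preserving ones on $\PP^1\Rbb$, restriction to the bijection between the arcs $U$ and $V$ yields precisely the order-preserving/order-reversing dichotomy.

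I expect no substantial conceptual obstacle: the argument is direct matrix algebra plus the single fuchsian-group input that cusp stabilizers contain no hyperbolic elements. The mild subtlety is remembering that preserving $\set{0,\infty}$ as a set is strictly weaker than preserving $I_0=[0,\infty]$ as a set, so in each of the diagonal and anti-diagonal cases one must check which sign of determinant is actually compatible with the orientation of the positive real axis.
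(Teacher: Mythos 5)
Your proof is correct and follows essentially the same route as the paper: classify the diagonal and anti-diagonal matrices preserving $I_0$, use the fact that the stabilizer of the cusp $\infty$ in a finite-covolume fuchsian group contains no hyperbolic elements to force $\alpha=1$, get uniqueness of the reflection by dividing two candidates, and deduce (2) by conjugating back to $I_0$. The only cosmetic difference is that the paper rules out the wrong determinant sign in the anti-diagonal case by noting the map must fix a point of $(0,\infty)$, where you use a sign check on the action; these are the same computation.
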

\begin{proof}
Let $A\in\PSL^\pm_2\Rbb$ be a matrix that fixes $I_0$ setwise. If it fixes $0$ and $\infty$, then it is of the form $\bbmatrix{\alpha}{}{}{\alpha\m}$ for some $\alpha>0$. Therefore, if $A$ belongs to $\Gamma^\pm$, then it belongs to $\Gamma$; since $\infty$ is a cusp of $\Gamma$, we have
$\alpha=1$.
On the other hand, if $A$ exchanges $0$ and $\infty$ then it must fix some $0<\alpha<\infty$, and it readily follows that $A=\bbmatrix{}{\alpha}{\alpha\m}{}$. If $\Gamma^\pm$ contains $\bbmatrix{}{\beta}{\beta\m}{}$ as well, then $\Gamma$ contains 
$\bbmatrix{\alpha\beta\m}{}{}{\alpha\m\beta}$ and, by the above, $\alpha=\beta$.
This proves (1), and (2) is an easy consequence.
\end{proof}

We can now make precise our definition of continued fraction algorithm, which we identify with a Gauss map.

\begin{definition}
Let\label{ref8} $K,d,\Obb,\Gamma^\pm$ be as in Notation~\ref{ref21}, and let $A_1,\ldots,A_r\in\Gamma^\pm$ be such that:
\begin{itemize}
\item[(iv)] the family of intervals $I_a=A_a*I_0$, for $a=1,\ldots,r$, is a unimodular partition of $I_0$;
\item[(v)] $\det A_r=+1$.
\end{itemize}
The \newword{slow continued fraction algorithm} determined by $A_1,\ldots,A_r$ is the Gauss map $T:I_0\to I_0$ which is induced on $I_a$ by $A_a\m$ (by convention, the endpoint in $I_a\cap I_{a+1}$ is mapped using $A_a\m$). The maps $A_a\m$ are the \newword{pieces} of $T$.
If $d=2$ (respectively, $d=3$), then we say that $T$ is
\newword{quadratic} (\newword{cubic}).
We denote by $\Gamma^\pm_T$ the subgroup of $\Gamma^\pm$ generated by $A_1,\ldots,A_r$, and we let $\Gamma_T=\Gamma^\pm_T\cap\PSL_2\Rbb$.
\end{definition}

The two restrictions in Definition~\ref{ref8} ---about the determinant of $A_r$ and the definition of $T$ at endpoints--- simplify some of the proofs and are essentially irrelevant. Indeed, ambiguity at endpoints may occur at most once along the $T$-orbit of a point, and is analogous to the existence of two finite ordinary c.f.~expansions for rational numbers.
On the other hand, requiring $\det A_r=+1$ amounts to requiring that $\infty$ be a fixed point for $T$. This can always be achieved, either by replacing $T$ with $T^2$ if $\det A_r=-1=\det A_1$, or by conjugating $T$ with $x\mapsto x\m$ if $\det A_r=-1=-\det A_1$.

As assumed in Notation~\ref{ref21} (and already used in the proof of Lemma~\ref{ref7}), the point~$\infty$ is a cusp of~$\Gamma$, and thus an indifferent fixed point of~$T$.
This forces any $T$-invariant measure absolutely continuous with respect to Lebesgue measure to be infinite, and gives poor approximation properties to the convergents; hence the adjective ``slow'' in Definition~\ref{ref8}. One remedies this by accelerating the algorithm (i.e., considering the first-return map to some subinterval of~$I_0$); note that the acceleration process leaves $\Gamma^\pm_T$ unaltered~\cite[\S2]{panti18}. 
Since we are concerned only with the eventual orbits of points, we do not need any acceleration.

\begin{lemma}
The group $\Gamma_T^\pm$\label{ref11} has finite index in $\Gamma^\pm$, and hence so does $\Gamma_T$ in $\Gamma$.
\end{lemma}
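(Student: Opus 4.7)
The plan is to upper-bound the covolume of $\Gamma_T^\pm$ by exhibiting a finite-area region that $\Gamma_T^\pm$-covers all of $\Hcal$, then use the identity $[\Gamma^\pm:\Gamma_T^\pm]=\mathrm{covol}(\Gamma_T^\pm)/\mathrm{covol}(\Gamma^\pm)$; the denominator is finite by Notation~\ref{ref21}(ii), so a finite upper bound on the numerator forces finite index. The natural candidate is the ideal $(r+1)$-gon $P\subset\Hcal$ whose ideal vertices, in cyclic order, are $p_1=0,p_2,\ldots,p_r,p_{r+1}=\infty$, where $I_a=[p_a,p_{a+1}]$; its $r+1$ sides are the geodesics spanning the $I_a$ together with the imaginary axis from $\infty$ back to $0$, and $\mathrm{area}(P)<\infty$ since $P$ is an ideal polygon with finitely many vertices.

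To establish $\Gamma_T^\pm\cdot P\supseteq\Hcal$, I would first tessellate the half-plane $\Hcal^+$ with ideal boundary $I_0$. Since $A_a*I_0=I_a$, each $A_a\Hcal^+$ is the sub-region of $\Hcal^+$ with ideal boundary $I_a$, and the $I_a$-partition of $I_0$ yields the disjoint decomposition $\Hcal^+=P\sqcup\bigsqcup_a A_a\Hcal^+$. Iterating inside every $A_a\Hcal^+$ tessellates $\Hcal^+$ by the positive-word translates $w\cdot P$, with $w=A_{a_1}\cdots A_{a_k}$. To pass from $\Hcal^+$ to $\Hcal$, observe that condition~(v) of Definition~\ref{ref8} makes $A_r$ orientation-preserving and, combined with the cuspidality of $\infty$, forces it to be a parabolic translation $z\mapsto z+\mu$ with $\mu>0$; the sets $A_r^{-k}\Hcal^+=\{\re z>-k\mu\}$ then exhaust $\Hcal$, and applying $A_r^{-k}$ to the positive-word tessellation covers each $A_r^{-k}\Hcal^+$ --- hence all of $\Hcal$ --- by $\Gamma_T^\pm$-translates of $P$. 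Intersecting with $\PSL_2\Rbb$ yields the statement for $\Gamma_T\subseteq\Gamma$.

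The main obstacle is making the tessellation of $\Hcal^+$ fully rigorous: every $z\in\Hcal^+$ must fall into some finite-depth translate $w\cdot P$, rather than merely into the nested intersection $\bigcap_k w_k\Hcal^+$ of an infinite descent. Were this to fail, the ideal-boundary intervals $w_k I_0$ would shrink to a single boundary point $\xi\in\overline{I_0}$, and the enclosing regions $w_k\Hcal^+$ must shrink with them: if $\xi\ne\infty$ they are half-disks of vanishing Euclidean diameter around $\xi$, and if $\xi=\infty$ --- which forces the itinerary to be constantly $r$ --- they are receding half-planes $\{\re z>k\mu\}$. In either case, no interior point of $\Hcal$ survives in the intersection, and the descent terminates.
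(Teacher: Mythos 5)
Your strategy is the same as the paper's: the ideal $(r+1)$-gon you call $P$ is exactly the region $\Rcal=\Hcal^+\setminus\bigcup_a A_a*\Hcal^+$ used there, and both arguments reduce the lemma to showing that the $\Gamma_T^\pm$-translates of this finite-area polygon cover $\Hcal$, so that $\Gamma_T^\pm$ has finite covolume and hence finite index. The covolume reduction, the decomposition $\Hcal^+=P\cup\bigcup_a A_a*\Hcal^+$, and the exhaustion of $\Hcal$ by the sets $A_r^{-k}*\Hcal^+$ (with $A_r$ parabolic because $\infty$ is a cusp and the group is discrete) are all fine.

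The gap is in your last paragraph, at exactly the point you flag as the main obstacle. You assert that, were some $z$ to survive the infinite descent, ``the ideal-boundary intervals $w_kI_0$ would shrink to a single boundary point $\xi$.'' That is not a consequence of the assumed failure; it is an independent claim, and it is precisely the nontrivial one. A priori the nested intervals $w_kI_0$ could converge to a nondegenerate interval $[\alpha,\beta]$, in which case the regions $w_k*\Hcal^+$ decrease to the half-plane bounded by the geodesic joining $\alpha$ to $\beta$, which has nonempty interior, and the covering genuinely fails. Note that the entries of the $w_k$ lie in the (non-discrete) set $\Obb\cap\Rbb\p$ of a real quadratic field, so no cheap counting of bounded denominators is available. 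The fact that nested unimodular intervals always shrink to a singleton is Theorem~\ref{ref15} of the paper, and its proof is not formal: it uses the proper discontinuity of the $\Gamma^\pm$-action (hypothesis (i) of Notation~\ref{ref21}) via a compactness argument in $\PP\Mat_2\Rbb$. Your write-up never invokes proper discontinuity, so an essential hypothesis goes unused and the termination of the descent remains unproved. (The paper's own proof is admittedly terse at the same spot --- it passes from ``each $\Hcal^+\setminus A_a*\Hcal^+$ is a fundamental domain for $\langle A_a\rangle$'' to ``$\Rcal$ contains a fundamental domain for $\Gamma_T^\pm$'' without detail --- so you have correctly located where the work is; you just have not done it.)
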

\begin{proof}
By our assumptions $\Gamma^\pm$ has a fundamental domain of finite hyperbolic area; it is then sufficient to show that the same holds for $\Gamma^\pm_T$. 
Let $\Hcal^+=\set{z\in\Hcal:\re z\ge0}$. For every $a\in\set{1,\ldots,r}$ we have
\[
A_a*\Hcal^+\subset\Hcal^+\quad\text{and}\quad
\Hcal={\textstyle\bigcup}\set{A_a^k*\Hcal^+:k\in\Zbb}.
\]
Therefore $\Hcal^+\setminus(A_a*\Hcal^+)$ is a fundamental domain for the infinite cyclic group generated by $A_a$, and this implies that $\Rcal=\Hcal^+\setminus\bigcup\set{A_a*\Hcal^+:1\le a\le r}$ contains a fundamental domain for the action of $\Gamma^\pm_T$.
Let $0=\alpha_0<\alpha_1<\cdots<\alpha_r=\infty$ be the points determining the unimodular partition $I_1,\ldots,I_r$. Then $\Rcal$ is bounded by the full geodesics $g_0,g_1,\ldots,g_r$, where $g_i$ is the geodesic having endpoints $\alpha_{i-1}$ and $\alpha_i\pmod{r+1}$ (see the thickly bordered region in Figure~\ref{fig1} (left)).
By elementary considerations $\Rcal$ has finite hyperbolic area, and our claim follows.
\end{proof}

We note as an aside that in the case $\Gamma^\pm=\PSL^\pm_2\Zbb$ Lemma~\ref{ref11} has a sharper formulation: for every $T$, the index of $\Gamma^\pm_T$ in $\PSL^\pm_2\Zbb$ is bounded by ~$8$~\cite[Theorem~4.1]{panti18}.

We now specialize to the case in which $\Gamma^\pm$ is an extended triangle group.
Let $2\le l\le m\le n\le\infty$ be integers such that $1/l+1/m+1/n<1$; then there exists a triangle in $\Hcal$, unique up to isometry, having angles $\pi/l,\pi/m,\pi/n$. The group generated by reflections in the sides of this triangle is the \newword{extended triangle group} $\Delta^\pm(l,m,n)<\PSL^\pm_2\Rbb$, and its index-$2$ subgroup of orientation-preserving elements is the \newword{triangle group} $\Delta(l,m,n)$, which is a fuchsian group of cofinite volume.
Clearly the original triangle is a fundamental domain for $\Delta^\pm(l,m,n)$; we will call it the \newword{fundamental triangle}.
Let $T$ be a Gauss map obtained from $\Delta^\pm(l,m,n)$ via the construction in Definition~\ref{ref8}. As remarked above, our assumptions imply that~$\Gamma_T$ contains parabolic elements, and thus at least one of the vertices of the fundamental triangle must lie at infinity. Therefore, $n$ equals $\infty$ and $\Delta(l,m,\infty)$ is not cocompact. 

\begin{example}
Consider\label{ref10} the group $\Delta^\pm(4,\infty,\infty)$ and fix as fundamental triangle the one with
vertices $0$, $\infty$, and $\bigl(1+i\tan(\pi/8)\bigr)/2$.
The matrices expressing reflections in the triangle sides are
(see Figure~\ref{fig1} (left))
\begin{equation}\label{eq2}
J=\begin{bmatrix}
-1 & \\
& 1
\end{bmatrix},\quad
P=\begin{bmatrix}
-1 & 1\\
& 1
\end{bmatrix},\quad
M=\begin{bmatrix}
1 & \\
2+\sqrt{2} & -1
\end{bmatrix}.
\end{equation}
Postcomposing $J$ first with $M$, and then with powers of the counterclockwise rotation $MP$ (of order $4$), we obtain matrices $A_1,\ldots,A_7$ as follows:
\begin{gather*}
A_1=MJ=\begin{bmatrix}
1 & 0\\
2+\sqrt{2} & 1
\end{bmatrix},\quad
A_2=(MP)J=\begin{bmatrix}
1 & 1\\
2+\sqrt{2} & 1+\sqrt{2}
\end{bmatrix},\\
A_3=(MP)MJ,\quad
A_4=(MP)^2J,\quad
A_5=(MP)^2MJ,\quad
A_6=(MP)^3J,\\
A_7=(MP)^3MJ=(MP)\m MJ=PJ=\begin{bmatrix}
1 & 1\\
& 1
\end{bmatrix}.
\end{gather*}
The corresponding unimodular partition is 
\[
0<1-\frac{1}{2}\sqrt{2}<-1+\sqrt{2}<\frac{1}{2}<2-\sqrt{2}
<\frac{1}{2}\sqrt{2}<1<\infty.
\]
The matrices $A_1,A_2,A_7$ clearly generate $\Delta^\pm(4,\infty,\infty)$, which then equals $\Gamma_T^\pm$; of course $K=\Qbb(\sqrt{2})$, $d=2$, $\Obb=\Zbb[\sqrt{2}]$.
We draw in Figure~\ref{fig1} (left) the fundamental triangle, its $M$-image (the union of the two is a fundamental domain for $\Delta(4,\infty,\infty)$), and the images of both under successive powers of $MP$.
The image in the unit disk of the
region $\Hcal^+\setminus\bigcup\set{A_j*\Hcal^+:1\le j\le 7}$
of Lemma~\ref{ref11} is bordered by thick geodesics.
We also draw in Figure~\ref{fig1} (right) the resulting map $T$ which equals, up to a conjugation, the octagon Farey map in~\cite{smillieulcigrai11}, \cite{smillieulcigrai10}; indeed, setting $E=\bbmatrix{1-\sqrt{2}}{1}{}{2}$ and $i=1,\ldots,7$, the matrix $\gamma\nu_i$ of~\cite[p.~34]{smillieulcigrai10} is $E\m A_i\m E$.
Note that, since we want $\infty$ to appear as an ordinary point, here and elsewhere
we draw the hyperbolic plane in the disk model via the Cayley map
$2^{-1/2}\bbmatrix{1}{-i}{-i}{1}$.
For the same reason, the graph in Figure~\ref{fig1} (right) is actually the graph of the conjugate of $T$ under the projective transformation
$\bbmatrix{1}{}{1}{1}:[0,\infty]\to [0,1]$.
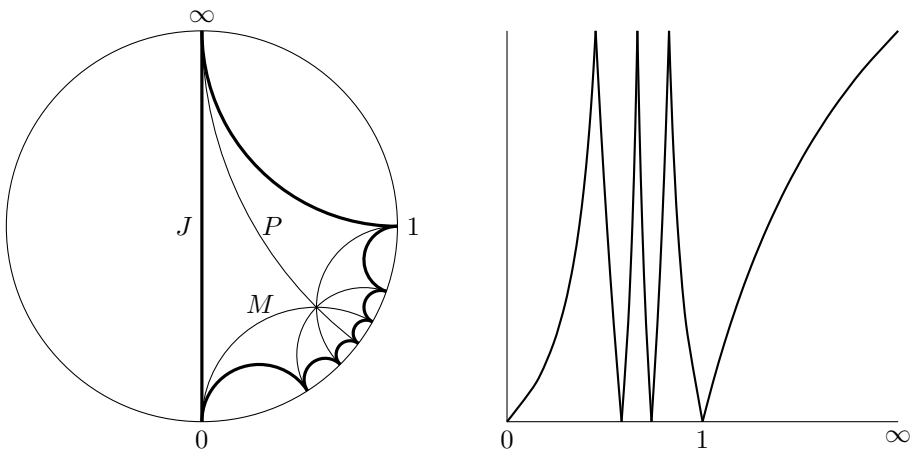
\begin{figure}[h!]
\begin{tikzpicture}[scale=2.6]
\coordinate (p0) at (0,-1);
\coordinate (p2) at (0.7071, -0.7071);
\coordinate (p4) at (0.8722, -0.4890);
\coordinate (p6) at (1, 0);
\coordinate (p1) at (0.5395, -0.8419);
\coordinate (p3) at (0.8, -0.6);
\coordinate (p5) at (0.9428, -0.3333);
\coordinate (p7) at (0, 1);
\coordinate (p8) at (0.5857, -0.4142);
\coordinate (p01) at (0.4361, -0.7445);
\coordinate (p12) at (0.5425, -0.7207);
\coordinate (p23) at (0.7411, -0.5876);
\coordinate (p34) at (0.7734, -0.5469);
\coordinate (p45) at (0.8535, -0.3535);
\coordinate (p56) at (0.8603, -0.27127);
\coordinate (p67) at (0.4, 0.2);
\draw (0,0) circle [radius=1cm];
\draw (p0) to[arc through cw=(p8)] (p4);
\draw (p1) to[arc through cw=(p8)] (p5);
\draw (p2) to[arc through cw=(p8)] (p6);
\draw (p3) to[arc through cw=(p8)] (p7);
\draw[very thick] (p7) to (p0);
\draw[very thick] (p0) to[arc through cw=(p01)] (p1);
\draw[very thick] (p1) to[arc through cw=(p12)] (p2);
\draw[very thick] (p2) to[arc through cw=(p23)] (p3);
\draw[very thick] (p3) to[arc through cw=(p34)] (p4);
\draw[very thick] (p4) to[arc through cw=(p45)] (p5);
\draw[very thick] (p5) to[arc through cw=(p56)] (p6);
\draw[very thick] (p6) to[arc through cw=(p67)] (p7);

\node[left] at (0,0) {$J$};
\node at (0.36,0) {$P$};
\node at (0.3,-0.4) {$M$};

\node[below] at (0,-1) {$0$};
\node[right] at (1,0) {$1$};
\node[above] at (0,1) {$\infty$};

\end{tikzpicture}
\hspace{0.6cm}
\begin{tikzpicture}[scale=2.6]
\draw (-1,1) -- (-1,-1) -- (1,-1);
\draw[thick] plot[smooth] coordinates {(-1,-1) (-0.85090,-0.8) (-0.76230,-0.6) (-0.70359,-0.4) (-0.66183,-0.2) (-0.63060,0) (-0.60636,0.2) (-0.58701,0.4) (-0.57120,0.6) (-0.55804,0.8) (-0.54691,1)};
\draw[thick] plot[smooth] coordinates {(-0.41421,-1) (-0.45418,-0.5) (-0.48904,0) (-0.51971,0.5) (-0.54691,1)};
\draw[thick] plot[smooth] coordinates {(-0.41421,-1) (-0.38071,-0.5) (-0.35924,0) (-0.34431,0.5) (-0.33333,1)};
\draw[thick] plot[smooth] coordinates {(-0.26120,-1) (-0.28909,-0.5) (-0.30839,0) (-0.32252,0.5) (-0.33333,1)};
\draw[thick] plot[smooth] coordinates {(-0.26120,-1) (-0.23103,-0.5) (-0.20710,0) (-0.18767,0.5) (-0.17157,1)};
\draw[thick] plot[smooth] coordinates {(0,-1) (-0.08454,-0.5) (-0.12773,0) (-0.15396,0.5) (-0.17157,1)};
\draw[thick] plot[smooth] coordinates {(0,-1) (0.05263,-0.8) (0.11111,-0.6) (0.17647,-0.4) (0.25,-0.2) (0.33333,0) (0.42857,0.2) (0.53846,0.4) (0.66666,0.6) (0.81818,0.8) (1,1)};
\node[below] at (-1,-1) {$0$};
\node[below] at (0,-1) {$1$};
\node[below] at (1,-1) {$\infty$};
\end{tikzpicture}
\caption{The $(4,\infty,\infty)$ case.\\
Fundamental domain and some images; associated interval map.}
\label{fig1}
\end{figure}

We remark that $\Delta^\pm(4,\infty,\infty)$ does not contain elements of the form 
$\bbmatrix{}{\alpha}{\alpha\m}{}$ (otherwise the fundamental triangle would not be a fundamental domain) and therefore, by Lemma~\ref{ref7}, the pieces of $T$ are uniquely determined by the intervals. In other words, the only matrix in (this specific embedding of) $\Delta^\pm(4,\infty,\infty)$ that maps $I_a$ to $I_0$ is $A_a\m$. Thus, we cannot avoid pieces with negative slopes; if we insist on having all pieces in $\Delta(4,\infty,\infty)$ we must necessarily unfold the map, as in the original construction in~\cite{arnouxhubert00}.
\end{example}

We need more vocabulary.
Let $\Gamma$ be a fuchsian group of finite covolume, and let $\Gamma^2$ be the subgroup generated by the squares of the elements of $\Gamma$; since $\Gamma$ is finitely generated, $\Gamma^2$ is a normal subgroup of finite index.
The \newword{invariant trace field} of $\Gamma$ is the trace field $F_2$ of $\Gamma^2$; of course it is a subfield of the trace field $F_1$ of $\Gamma$. Both $F_2$ and the \newword{invariant quaternion algebra} $F_2[\Gamma^2]$ are invariants (although not complete) of the commensurability class of $\Gamma$~\cite[Theorem~3.3.4 and Corollary~3.3.5]{maclachlanreid03}.

If $\Gamma=\Delta(l,m,n)$ then $F_1$, $F_2$, and $F_2[\Gamma^2]$ can be easily computed. Let $\lambda_l=2\cos(\pi/l)$, and analogously for $m$ and $n$ ($\lambda_\infty=2$); then
\[
F_1=\Qbb(\lambda_l,\lambda_m,\lambda_n),\quad
F_2=\Qbb(\lambda_l^2,\lambda_m^2,\lambda_n^2,\lambda_l\lambda_m\lambda_n).
\]
In the noncocompact case $n=\infty$ ---which is the one concerning us--- the algebra $F_2[\Gamma^2]$ always equals $\Mat_2F_2$ (because
$\Gamma^2$ contains a parabolic element, whose difference with the identity is nilpotent). In the cocompact case $F_2[\Gamma^2]$ is given by the Hilbert symbol~\cite[Lemma~3.5.7]{maclachlanreid03}
\[
\biggl(
\frac{\lambda_m^2(\lambda_m^2-4),\;\lambda_m^2\lambda_n^2
(\lambda_l^2+\lambda_m^2+\lambda_n^2+\lambda_l\lambda_m\lambda_n-4)}{F_2}
\biggr).
\]
Each archimedean place $\sigma:F_2\to\Rbb$ yields a real quaternion algebra $F_2[\Gamma^2]\otimes_\sigma\Rbb$, which necessarily equals either $\Mat_2\Rbb$ (in which case we say that $\sigma$ is \newword{split}) or the Hamilton quaternions. The numbers of archimedean split places is the \newword{arithmetic dimension} of $\Gamma$.

In our noncocompact case, the arithmetic dimension of $\Gamma$ is just the degree of $F_2$ over $\Qbb$, and the list of all triangle groups of arithmetic dimension~$2$ could be generated by exhaustive search. Indeed, if $\Delta(l,m,\infty)$ is such a group, then its invariant trace field must contain $\cos(2\pi/l)$, which has degree~$\varphi(l)/2$ over $\Qbb$. Thus we have the bound $\varphi(l)\le4$, 
yielding $l\in\set{2,3,4,5,6,8,10,12}$,
and similarly for~$m$ if different from~$\infty$. We spare such search by 
appealing to~\cite{nugentvoight17}, which proves that the set of triangle groups, including the cocompact ones, of bounded arithmetic dimension is finite, and provides efficient algorithms for its enumeration.
In particular, \cite[\S6]{nugentvoight17} gives explicit lists of all triangle groups of low dimension: there are $76$ cocompact and $9$ noncocompact groups of dimension $1$ (this is the arithmetic case, already settled in~\cite{takeuchi77a}) and, for dimension from $2$ up to $5$, the corresponding pairs of cardinalities are $(148,16)$, $(111,13)$, $(286,31)$, $(94,6)$. 

In~\cite[Proposition~1]{takeuchi77b} Takeuchi proves that in the arithmetic case the pair $(F_2,F_2[\Gamma^2])$ is a complete invariant of the commensurability class of $\Gamma$, and uses this fact to refine his own work in~\cite{takeuchi77a} to a classification of all $85$ triangle groups of arithmetic dimension~$1$ up to commensurability.
The only noncocompact commensurability class is that of the modular group $\PSL_2\Zbb=\Delta(2,3,\infty)$, and it turns out that there are~$18$ cocompact classes. When the arithmetic dimension is greater than $1$ then the pair $(F_2,F_2[\Gamma^2])$ is not complete anymore, so we apply a different strategy.

\begin{theorem}
Two\label{ref9} nonarithmetic triangle groups $\Gamma,\Delta$ are commensurable if and only if there exists $C\in\PSL_2\Rbb$ and a triangle group $\Xi$ containing both $\Gamma$ and $C\Delta C\m$ with finite index.
\end{theorem}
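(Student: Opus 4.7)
The backward implication is immediate: if a fuchsian lattice $\Xi$ contains both $\Gamma$ and $C\Delta C\m$ with finite index, then $\Gamma\cap C\Delta C\m$ has finite index in each of them, which witnesses the commensurability of $\Gamma$ and $\Delta$.

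For the forward implication, suppose $C_0\in\PSL_2^\pm\Rbb$ is such that $C_0\Delta C_0\m\cap\Gamma$ has finite index in both. I first arrange $C_0$ to be orientation-preserving: writing $\Delta=\Delta(l,m,n)$ as the index-$2$ orientation-preserving subgroup of its extended group $\Delta^\pm(l,m,n)$, any reflection $R\in\Delta^\pm\setminus\Delta$ lies in $\PSL_2^-\Rbb$ and normalizes $\Delta$, so replacing $C_0$ by $C_0R$ (if needed) gives a conjugator $C\in\PSL_2\Rbb$ producing the same subgroup $C\Delta C\m=C_0\Delta C_0\m=:\Delta'$.

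The main tool is now Margulis's commensurability-rigidity theorem: since $\Gamma$ is a nonarithmetic lattice in $\PSL_2\Rbb$, its commensurator
\[
\Xi := \mathrm{Comm}_{\PSL_2\Rbb}(\Gamma) = \{g\in\PSL_2\Rbb : g\Gamma g\m\cap\Gamma \text{ has finite index in both}\}
\]
is itself a discrete cofinite lattice in $\PSL_2\Rbb$, containing $\Gamma$ with finite index. A routine diagram chase using $\Gamma\cap\Delta'$ shows that each $d\in\Delta'$ commensurates $\Gamma$: namely, $d(\Gamma\cap\Delta')d\m\cap(\Gamma\cap\Delta')$ has finite index in $\Delta'$, and is contained in $d\Gamma d\m\cap\Gamma$, so the latter has finite index in both $\Gamma$ and $d\Gamma d\m$. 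Hence $\Delta'\subseteq\Xi$; and since $\Gamma\cap\Delta'$ has finite index in $\Gamma$ and $\Gamma$ has finite index in $\Xi$, the subgroup $\Delta'$ too has finite index in $\Xi$.

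It remains to show that $\Xi$ is itself a triangle group; this is the main obstacle. One clean approach is to cite Singerman's 1972 classification of finite-index inclusions between cofinite fuchsian groups, which implies that any fuchsian overgroup of a triangle group with finite index is again a triangle group. A self-contained alternative proceeds via Riemann--Hurwitz: writing $\Xi$ with signature $(g;e_1,\ldots,e_r;s)$, the finite-degree covering $\Hcal/\Gamma\to\Hcal/\Xi$ together with the very small orbifold Euler characteristic of the triangle quotient $\Hcal/\Gamma$ leaves only finitely many numerical possibilities for $(g,r,s;e_1,\ldots,e_r)$. A short ramification analysis over the three distinguished $\Gamma$-orbits of orders $l,m,n$ then rules out every case with $g>0$ or $r+s>3$, forcing $\Xi=\Delta(l',m',n')$ for some signature.
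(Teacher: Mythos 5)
Your proof is correct and follows essentially the same route as the paper: Margulis's commensurator theorem for nonarithmetic lattices yields a cofinite fuchsian overgroup $\Xi$ containing both groups with finite index, and Singerman's result that a finite-index fuchsian overgroup of a triangle group is again a triangle group finishes the argument (the paper phrases this last step via the triviality of the Teichm\"uller space of triangle groups, which is the same reference). One remark: your Riemann--Hurwitz ``self-contained alternative'' is only a sketch and the Euler-characteristic count alone does not rule out non-triangle overgroups, so the Singerman citation is doing the real work there; on the other hand, your explicit handling of the orientation of the conjugator $C$ is a point the paper glosses over.
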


By the results in~\cite{greenberg63}, \cite{singerman72}, the relation of inclusion among triangle groups is easily settled, so we can split
the classes in~\cite[\S6]{nugentvoight17} in commensurability subclasses. In particular, the~$16$ noncocompact triangle groups of arithmetic dimension~$2$
are split according to the table in Figure~\ref{fig2}. In it, the invariant trace field is displayed to the left, and a group lies over another if and only if the first one is a subgroup of the second (always of index $2$). Here and later on, for short and when no confusion may arise, we tacitly suppress the initial $\Delta$ (or $\Delta^\pm$) from our notation for triangle groups.
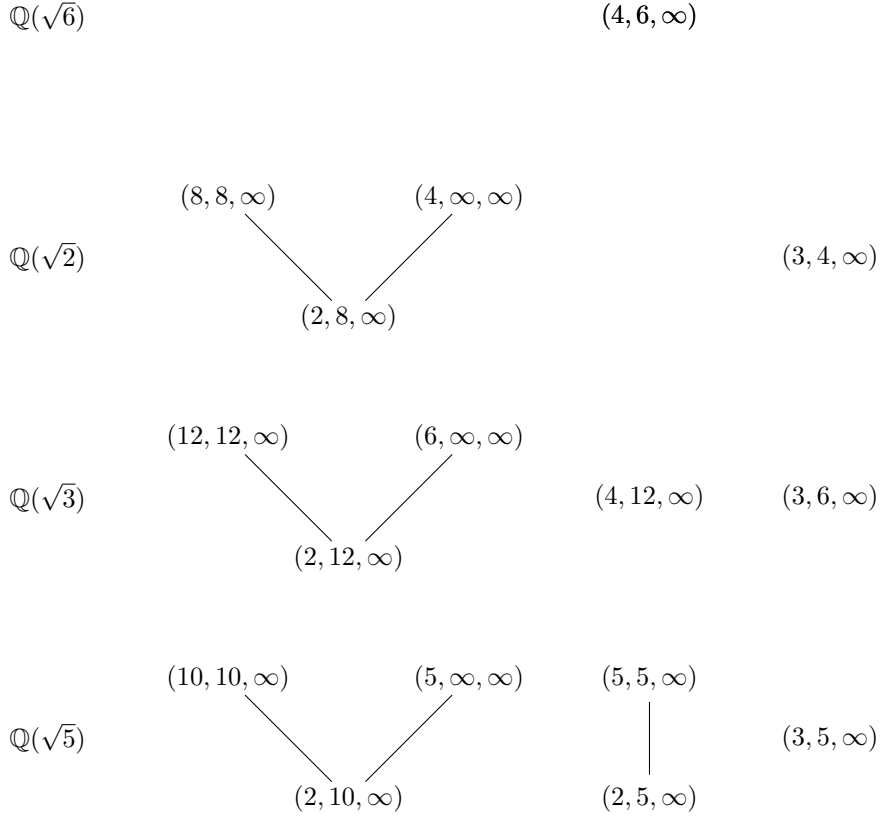
\begin{figure}[h!]
\begin{tikzpicture}[scale=0.8]
\node at (0,0) []  {$\Qbb(\sqrt{5})$};
\node at (0,4) []  {$\Qbb(\sqrt{3})$};
\node at (0,8) []  {$\Qbb(\sqrt{2})$};
\node at (0,12) []  {$\Qbb(\sqrt{6})$};
\node at (3,1) []  {$(10,10,\infty)$};
\node at (3,5) []  {$(12,12,\infty)$};
\node at (3,9) []  {$(8,8,\infty)$};
\node at (5,-1) []  {$(2,10,\infty)$};
\node at (5,3) []  {$(2,12,\infty)$};
\node at (5,7) []  {$(2,8,\infty)$};
\node at (7,1) []  {$(5,\infty,\infty)$};
\node at (7,5) []  {$(6,\infty,\infty)$};
\node at (7,9) []  {$(4,\infty,\infty)$};

\node at (10,-1) []  {$(2,5,\infty)$};
\node at (10,1) []  {$(5,5,\infty)$};
\node at (10,4) []  {$(4,12,\infty)$};
\node at (10,12) []  {$(4,6,\infty)$};
\node at (10,12) []  {$(4,6,\infty)$};

\node at (13,0) []  {$(3,5,\infty)$};
\node at (13,4) []  {$(3,6,\infty)$};
\node at (13,8) []  {$(3,4,\infty)$};

\path (5,-1) edge[sedge] (3,1);
\path (5,3) edge[sedge] (3,5);
\path (5,7) edge[sedge] (3,9);
\path (5,-1) edge[sedge] (7,1);
\path (5,3) edge[sedge] (7,5);
\path (5,7) edge[sedge] (7,9);

\path (10,-1) edge[sedge] (10,1);
\end{tikzpicture}
\caption{Noncocompact commensurability classes in dimension~$2$.}
\label{fig2}
\end{figure}

\begin{proof}[Proof of Theorem~\ref{ref9}]
A proof is sketched in~\cite[p.~759]{girondo_et_al12}; we supply a more detailed version for the reader's convenience. 
For the nontrivial direction, assume that $\Gamma$ and $\Delta$ are commensurable.
Then, after replacing $\Delta$ with an appropriate $C\Delta C\m$, we assume that $\Gamma$ and $\Delta$ intersect in a subgroup of finite index in both. Therefore $\Gamma$ and $\Delta$ have the same \newword{commensurator} $\Xi$, the latter being the group of all $D\in\PSL_2\Rbb$ such that $\Gamma$ and $D\Gamma D\m$ (respectively, $\Delta$ and $D\Delta D\m$) intersect in a subgroup of finite index in both. Since we are in the nonarithmetic case, the Margulis dichotomy~\cite{margulis75} implies that both $\Gamma$ and $\Delta$ have finite index in $\Xi$; hence $\Xi$ is fuchsian as well. But it is known that the only fuchsian groups that can contain a triangle group with finite index are themselves triangle groups (because the Teichm\"uller space of the larger group embeds in the space of the subgroup, and triangle groups are precisely those noncyclic fuchsian groups having trivial spaces; see~\cite[\S6]{singerman72}).
\end{proof}

\section{Continued fraction algorithms}\label{ref3}

In this section we introduce, for each commensurability class in Figure~\ref{fig2}, a Gauss map $T$ as in Definition~\ref{ref8} such that:
\begin{enumerate}
\item $T$ is continuous (equivalently, $\det A_a=(-1)^{r-a}$ for every $a=1,\ldots,r$);
\item $K$ is the invariant trace field of the class;
\item $\Gamma_T$ is a group in the class, with the exception of the
cases $(4,6,\infty)$ and $(4,12,\infty)$, in which $\Gamma_T$ has index~$2$ in the unique group of the class;
\item property (H) holds.
\end{enumerate}
Note that the exception in (3) is unavoidable, since the invariant trace field of $\Delta(4,6,\infty)$ and of $\Delta(4,12,\infty)$ is strictly contained in the respective trace fields.

The integer rings of our invariant trace fields $F_2=K=\Qbb(\sqrt{d})$, for $d=2,3,5,6$, are $\Zbb[\sqrt{2}],\Zbb[\sqrt{3}],\Zbb[\tau],\Zbb[\sqrt{6}]$, respectively ($\tau=(1+\sqrt{5})/2$ is the golden ratio). All of them have class number $1$; since the class group is in natural correspondence with the $\PSL_2\Obb$-orbits in $\PP^1K$,
this is surely necessary for all points in $K$ to be cusps of $\Gamma_T$ and hence, as remarked in~\S\ref{ref1}, for property (H) to hold.
Note that each commensurability class in Figure~\ref{fig2} contains a group whose cusps form a single group orbit.

\subsection{Case $(l,m,\infty)\in\set{(2,5,\infty),(3,4,\infty),(3,5,\infty),(3,6,\infty)}$} This\label{ref17} is the simplest and most flexible case: we take as fundamental triangle the one having vertices $-\exp(-i\pi/l)$, $\exp(i\pi/m)$, and $\infty$.
Let
\begin{equation*}
F=\begin{bmatrix}
 & 1\\
1 & 
\end{bmatrix},
\quad
Q_l=\begin{bmatrix}
\lambda_l & 1\\
-1 &
\end{bmatrix},\quad
R_m=\begin{bmatrix}
 & 1\\
-1 & \lambda_m
\end{bmatrix}.
\end{equation*}
Then $Q_l$ is a counterclockwise rotation of order $l$ around $-\exp(-i\pi/l)$, and $R_m$ is a counterclockwise rotation of order $m$ around $\exp(i\pi/m)$. Also, $\Delta(l,m,\infty)$ is a free product $\angles{Q_l}*\angles{R_m}=Z_l*Z_m$ of cyclic groups, while 
$\Delta^\pm(l,m,\infty)$ is an amalgamated product $\angles{Q_l,F}*_{\angles{F}}\angles{R_m,F}=D_l*_{Z_2}D_m$
of dihedral groups. 
Let $T'$ be the Gauss map determined by the $(l-1)(m-1)$ matrices $A'_{ij}=R_m^iQ_l^j$, for $1\le i<m$ and $1\le j<l$.
Figure~\ref{fig3} (left), which is drawn for $(l,m,\infty)=(3,4,\infty)$,
clarifies the definition; appropriate powers of $Q_l$ map $I_0$ to  subintervals of $[\infty,0]$, and powers of $R_m$ map back these subintervals inside $I_0$ (for short, in Figure~\ref{fig3} we dropped the subscripts in $Q_3$, $R_4$).
All matrices $A'_{ij}$ have positive determinant, so $T'$ is not continuous and has all pieces of positive slope. Replacing any $A'_{ij}$ with $A'_{ij}F$ we have full flexibility in converting each slope from positive to negative. In order to obtain a continuous map we perform this matrix replacement precisely at those intervals $I_1,\ldots,I_r$ ($r=(l-1)(m-1)$) having odd index; this completely defines our final map $T$. 
It is easy to check that the resulting matrices $A_1,\ldots,A_r$ generate $\Delta^\pm(l,m,\infty)$, and therefore $\Gamma_T=\Delta(l,m,\infty)$. Since here $\lambda_l$ equals $0$ or $1$, for this group the trace field agrees with the invariant trace field. The required properties (1), (2), (3) are thus satisfied; we will prove (4) in~\S\ref{ref5}.
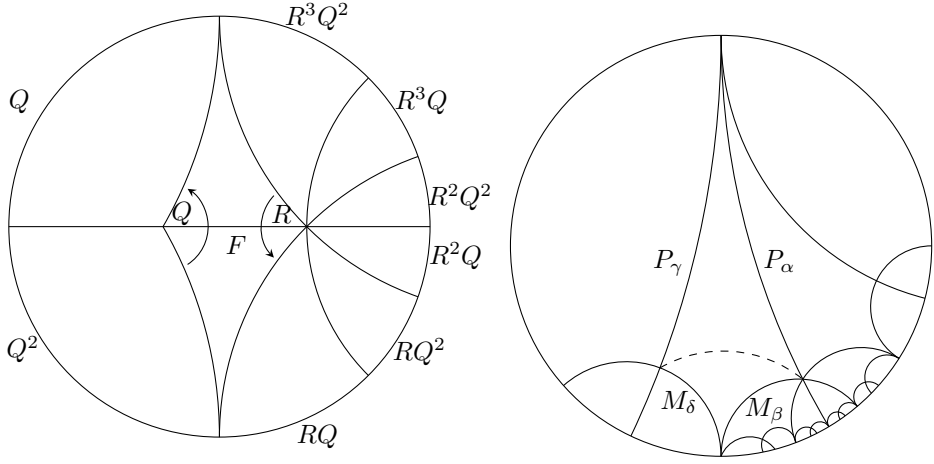
\begin{figure}[h!]
\begin{tikzpicture}[scale=2.8]
\coordinate (p0) at (0,-1);
\coordinate (p1) at (0.7071, -0.7071);
\coordinate (p2) at (0.9428, -0.3333);
\coordinate (p3) at (1, 0);
\coordinate (p4) at (0.9428, 0.3333);
\coordinate (p5) at (0.7071, 0.7071);
\coordinate (p6) at (0, 1);
\coordinate (p7) at (-1, 0);
\coordinate (p8) at (-0.2679491, 0);
\coordinate (p9) at (0.41421, 0);
\coordinate (p10) at (-0.8, -0.6);
\coordinate (p11) at (-0.8, 0.6);
\draw (0,0) circle [radius=1cm];
\draw (p7) to (p3);
\draw (p8) to[arc through ccw=(p10)] (p6);
\draw (p0) to[arc through ccw=(p11)] (p8);
\draw (p0) to[arc through cw=(p9)] (p4);
\draw (p1) to[arc through cw=(p9)] (p5);
\draw (p2) to[arc through cw=(p9)] (p6);
\node[below] at (0.08,0) {$F$};
\node at ($(p8)+(0.09,0.06)$) {$Q$};
\node at ($(p9)+(-0.12,0.06)$) {$R$};
\coordinate (pp) at (120:1);
\coordinate (qq) at (-120:1);
\pic[draw,-stealth,angle radius=0.6cm,angle eccentricity=1]{angle = pp--p9--qq};
\coordinate (pp1) at (-70:1);
\coordinate (qq1) at (70:1);
\pic[draw,-stealth,angle radius=0.6cm,angle eccentricity=1]{angle = pp1--p8--qq1};
\node at (-65:1.1) {$RQ$};
\node at (65:1.1) {$R^3Q^2$};
\node at (-32:1.12) {$RQ^2$};
\node at (212:1.1) {$Q^2$};
\node at (32:1.13) {$R^3Q$};
\node at (148:1.12) {$Q$};
\node at (-7:1.13) {$R^2Q$};
\node at (7:1.15) {$R^2Q^2$};
\end{tikzpicture}
\begin{tikzpicture}[scale=2.8]
\coordinate (p0) at (0,-1);
\coordinate (p1) at (0.2,-0.9797);
\coordinate (p2) at (0.2562,-0.9666);
\coordinate (p3) at (0.3550,-0.9348);
\coordinate (p4) at (0.4278,-0.9038);
\coordinate (p5) at (0.4545,-0.8907);
\coordinate (p6) at (0.5117,-0.8591);
\coordinate (p7) at (0.5657,-0.8245);
\coordinate (p8) at (0.5890,-0.8081);
\coordinate (p9) at (0.6468,-0.76258);
\coordinate (p10) at (0.71428,-0.6998);
\coordinate (p11) at (0.74787,-0.6638);
\coordinate (p12) at (0.8449,-0.5348);
\coordinate (p13) at (0.9684,-0.2492);
\coordinate (p14) at (1,0);
\coordinate (p15) at (0,1);
\coordinate (p16) at (-0.7478,-0.66383);
\coordinate (p17) at (-0.42787,-0.9038);
\coordinate (q0) at (-0.2898,-0.5797);
\coordinate (q1) at (0.1963,-0.9195);
\coordinate (q2) at (0.4269,-0.86518);
\coordinate (q3) at (0.5572,-0.7894);
\coordinate (q4) at (0.68516,-0.6477);
\coordinate (q5) at (0.7379,-0.1658);
\coordinate (q6) at (0.38799,-0.633593);
\coordinate (rr) at (0,-0.5);
\draw (0,0) circle [radius=1cm];
\draw (p16) to[arc through cw=(q0)] (p0);
\draw (p15) to[arc through cw=(q0)] (p17);
\draw (p0) to[arc through cw=(q6)] (p9);
\draw (p3) to[arc through cw=(q6)] (p12);
\draw (p6) to[arc through cw=(q6)] (p15);
\draw (p0) to[arc through cw=(q1)] (p2);
\draw (p1) to[arc through cw=(q1)] (p3);
\draw (p3) to[arc through cw=(q2)] (p5);
\draw (p4) to[arc through cw=(q2)] (p6);
\draw (p6) to[arc through cw=(q3)] (p8);
\draw (p7) to[arc through cw=(q3)] (p9);
\draw (p9) to[arc through cw=(q4)] (p11);
\draw (p10) to[arc through cw=(q4)] (p12);
\draw (p12) to[arc through cw=(q5)] (p14);
\draw (p13) to[arc through cw=(q5)] (p15);
\draw [dashed] (q0) to[arc through cw=(rr)]  (q6);
\node at ($(q0)+(0.04,0.5)$) {$P_\gamma$};
\node at ($(q0)+(0.09,-0.16)$) {$M_\delta$};
\node at ($(q6)+(-0.11,0.56)$) {$P_\alpha$};
\node at ($(q6)+(-0.18,-0.15)$) {$M_\beta$};
\end{tikzpicture}
\caption{Fundamental domain and some images in the $(3,4,\infty)$ and $(4,6,\infty)$ cases.}
\label{fig3}
\end{figure}

\subsection{Cases $(l,\infty,\infty)$, for $l=4,5,6$}\label{ref6}
The case $(4,\infty,\infty)$ has already been dealt with in Example~\ref{ref10}. In order to treat all cases in a uniform way, we first prove a lemma.

\begin{lemma}
Let\label{ref12} $\alpha,\beta\in\Rbb^*$, and let 
\[
P_\alpha=\begin{bmatrix}
-1 & \alpha\\
& 1
\end{bmatrix},
\qquad
M_\beta=\begin{bmatrix}
1 & \\
\beta & -1
\end{bmatrix}.
\]
Then these matrices express the reflections of $\Hcal$ in the geodesic $g_\alpha$ connecting $\infty$ with $\alpha/2$, and the geodesic $g_\beta$ connecting $0$ with $2/\beta$, respectively. Let $g_0$ be the geodesic connecting $0$ with $\infty$, which is the axis of reflection of the matrix $J$ in Example~\ref{ref10}.
The three geodesics $g_\alpha$, $g_\beta$, and $g_0$, form a hyperbolic triangle with
two vertices at infinity and another ---the one at which $g_\alpha$ and $g_\beta$
meet--- inside~$\Hcal$. Let us call the angle at this third vertex the \newword{inner angle}, of measure $2\pi/p$ (we still let $\lambda_p=2\cos(\pi/p)$, even though here~$p$ might be a non-integer). We then have  
$2\pi/p\le\pi/2$ if and only if $2\le\alpha\beta<4$; if this happens, then $\alpha\beta=\lambda_p^2=2+\lambda_{p/2}$, and the bisectrix of the inner angle is the geodesic connecting $-\sqrt{\alpha/\beta}$ with $\sqrt{\alpha/\beta}$.
\end{lemma}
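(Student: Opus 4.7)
The proof is by explicit computation in the upper half plane. For (1) and (2), note that $\det P_\alpha = \det M_\beta = -1$, so these matrices act antiholomorphically as $z\mapsto -\bar z + \alpha$ and $z\mapsto \bar z/(\beta\bar z - 1)$ respectively. Solving for fixed points in $\Hcal$ gives the vertical line $\re z = \alpha/2$ (the geodesic from $\alpha/2$ to $\infty$) and the Euclidean circle $|z - 1/\beta| = 1/\beta$ (whose upper half is the geodesic from $0$ to $2/\beta$), as claimed.

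For the remainder, one may assume $\alpha,\beta>0$, since otherwise $g_\alpha$ and $g_\beta$ lie in opposite halves of $\Hcal$ and no triangle forms. The pair $g_0,g_\alpha$ meets at the ideal point $\infty$, and $g_0,g_\beta$ at $0$. Intersecting the line $\re z = \alpha/2$ with the defining circle of $g_\beta$ yields $v = \alpha/2 + iy$ with $y^2 = \alpha(4-\alpha\beta)/(4\beta)$, so $v$ is a genuine point of $\Hcal$ precisely when $\alpha\beta < 4$.

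The inner angle at $v$ is found from tangent vectors. The upward tangent to $g_\alpha$ is $(0,1)$; the tangent to $g_\beta$ at $v$ in the direction pointing toward $0$ is orthogonal to the radial vector $v - 1/\beta$, and a brief calculation shows that the dot product of the two unit tangents equals $\alpha\beta/2 - 1$. Setting this equal to $\cos(2\pi/p)$ gives $2\pi/p \le \pi/2$ if and only if $\alpha\beta \ge 2$, and in that range
\[
\alpha\beta \;=\; 2 + 2\cos(2\pi/p) \;=\; 2 + \lambda_{p/2} \;=\; 4\cos^2(\pi/p) \;=\; \lambda_p^2
\]
by the double-angle identity.

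For the bisectrix, observe $|v|^2 = \alpha^2/4 + y^2 = \alpha/\beta$, so $v$ lies on the Euclidean circle of center $0$ and radius $\sqrt{\alpha/\beta}$; its upper arc is the geodesic joining $\pm\sqrt{\alpha/\beta}$. The Euclidean tangent to this circle at $v$ is perpendicular to $v$ itself, and one verifies it is a positive multiple of the sum of the two edge unit tangents computed above, hence bisects the inner angle. The only delicate point is orienting both edge tangents \emph{into} the triangle so that their sum points along the bisector rather than its supplement; once this is done, all verifications reduce to elementary Euclidean geometry.
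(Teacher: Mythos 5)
Your argument is correct in substance, but it is a genuinely different route from the paper's. The paper never locates the intersection point: it characterizes when $g_\alpha$ and $g_\beta$ meet ($0<\alpha\beta<4$, covering both sign patterns at once), then uses the identity $\abs{2\cos(2\pi/p)}=\abs{\tr(M_\beta P_\alpha)}=\abs{\alpha\beta-2}$ --- the product of two reflections is a rotation by twice the angle between the axes --- and resolves the resulting acute/obtuse sign ambiguity by a separate argument about which candidate position of $2/\beta$ yields the acute angle; the bisectrix is identified by solving $P_\alpha=CM_\beta C\m$ for $C=\bbmatrix{}{x}{x\m}{}$. Your computation of the vertex $v=\alpha/2+iy$ with $y^2=\alpha(4-\alpha\beta)/(4\beta)$, and of the inward unit tangents $(0,1)$ and $(-\beta y,\,\alpha\beta/2-1)$, is correct (the second is indeed a unit vector, and its second coordinate gives the signed cosine directly), so you get $\cos(2\pi/p)=\alpha\beta/2-1$ with no sign ambiguity to resolve --- a real simplification over the paper's case analysis. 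The bisectrix via $\abs{v}^2=\alpha/\beta$ and the sum-of-unit-tangents characterization is also correct and more concrete than the paper's conjugation identity, though the paper's version is slicker.

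One point needs repair: your reduction ``one may assume $\alpha,\beta>0$, since otherwise \ldots\ no triangle forms'' is wrong when $\alpha$ and $\beta$ are \emph{both} negative. In that case $g_\alpha$ and $g_\beta$ both lie in the left half-plane, they do meet whenever $\alpha\beta<4$, and a triangle forms; this case is actually used later in the paper (the matrices $P_\gamma,M_\delta$ with $\gamma,\delta<0$ in the $(4,6,\infty)$ and $(4,12,\infty)$ constructions). The fix is immediate --- conjugate by $z\mapsto-\bar z$, which fixes $g_0$, sends $g_\alpha,g_\beta$ to $g_{-\alpha},g_{-\beta}$, and leaves $\alpha\beta$ unchanged --- but as written your dichotomy (``$\alpha,\beta>0$ or no triangle'') is false and should be replaced by this symmetry reduction.
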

\begin{proof}
The first statement is easily checked, and implies that $g_\alpha,g_\beta$ intersect iff (either $0<\alpha/2<2/\beta$ or $2/\beta<\alpha/2<0$) iff $0<\alpha\beta<4$. Assume that they intersect with an inner angle of $2\pi/p$, for some $2<p$. Then, by the duplication formula for cosines,
$\abs{4\cos^2(\pi/p)-2}=\abs{2\cos(2\pi/p)}=\abs{\tr(M_\beta P_\alpha)}=\abs{\alpha\beta-2}$, so that
\begin{itemize}
\item either $\lambda_p^2-2=\alpha\beta-2$, i.e., $2/\beta=2\alpha/\lambda_p^2$,
\item or $\lambda_p^2-2=-\alpha\beta+2$, i.e., $2/\beta=2\alpha/(4-\lambda_p^2)$.
\end{itemize}
These two cases correspond to the sign ambiguity in $\cos(2\pi/p)$, namely, to the inner angle being acute or obtuse. Since we want it to be acute, we must choose $2/\beta$ to be smaller in absolute value. As $2\pi/p\le\pi/2$ implies $\lambda_p^2\ge2$, the second case is ruled out and
$\alpha\beta=\lambda_p^2$. The last claim follows by solving for $x$ the equation $P_\alpha=\bbmatrix{}{x}{x\m}{}M_\beta\bbmatrix{}{x}{x\m}{}\m$.
\end{proof}

Generalizing Example~\ref{ref10}, we treat case $(l,\infty,\infty)$ by using a fundamental triangle bounded by $g_\alpha,g_\beta,g_0$, having an inner angle of $\pi/l$, and such that $J,P_\alpha,M_\beta\in\PSL_2^\pm\Obb$, where $\Obb$ is the appropriate integer ring. According to Lemma~\ref{ref12}, the ways of accomplishing this are in 1--1 correspondence with the factorization of $2+\lambda_l$ in~$\Obb$; since $\Obb$ is a unique factorization domain, there is just one way, up to conjugation by units.

For $l=4$, the number $2+\lambda_4=2+\sqrt{2}$ is prime in $\Zbb[\sqrt{2}]$, and the latter has $1+\sqrt{2}$ as fundamental unit. Hence the admissible pairs $\alpha,\beta$ are exactly, up to exchanging $\alpha$ with $\beta$,
\[
\alpha=(1+\sqrt{2})^k,\quad
\beta=(1+\sqrt{2})^{-k}(2+\sqrt{2}),
\]
for $k\in\Zbb$. In Example~\ref{ref10} we just set $k=0$. The choice $k=1$ would have been slightly more symmetric, as $\alpha/\beta$ is then nearer to~$1$, but would have made the construction in~\S\ref{ref5} more involved; hence we maintain $k=0$ for our official~$T$.

For $l=5$, $2+\lambda_5=2+\tau$ is prime in $\Zbb[\tau]$ while, for $l=6$, $2+\lambda_6=2+\sqrt{3}$ is the fundamental unit in $\Zbb[\sqrt{3}]$. We retain $\alpha=1$ in both cases, and set $\beta$ equal to either $2+\tau$ or $2+\sqrt{3}$. Writing $P$ for $P_1$, the rotation $M_\beta P$ has then
order~$l$, and $T$ has $2l-1$ pieces, with $A_1,\ldots,A_{2l-1}$ given by
\[
A_{1+2q}=(M_\beta P)^qM_\beta J,\quad A_{2+2q}=(M_\beta P)^{q+1}J,\quad
A_{2l-1}=(M_\beta P)^{l-1}M_\beta J=\begin{bmatrix}
1 & 1\\
& 1
\end{bmatrix},
\]
for $0\le q\le l-2$.
We still have $\Gamma^\pm_T=\Delta^\pm(l,\infty,\infty)$ and (1), (2), (3) are satisfied. Here the situation is much more rigid than in~\S\ref{ref17}. Indeed $\Delta^\pm(l,\infty,\infty)$ does not contain any reflection of the form given in Lemma~\ref{ref7}, and no flexibility is allowed in the orientation of the pieces of $T$.

\subsection{Cases $(4,m,\infty)$, for $m=6,12$}\label{ref18}
In both cases the trace field is $\Qbb(\sqrt{2},\sqrt{3})$, which contains strictly the invariant trace fields $\Qbb(\sqrt{6})$ and
$\Qbb(\sqrt{3})$ (for $m=6$ and $m=12$, respectively); therefore our $\Gamma^\pm_T$ has to be a proper subgroup of $\Delta^\pm(4,m,\infty)$. We will use a reflection group, with fundamental domain the union of two $(4,m,\infty)$-triangles, images of each other under a reflection in a common edge (the dashed geodesic arc in Figure~\ref{fig3} (right)).

Let us first treat the case $m=6$; according to Lemma~\ref{ref12}, we must find $\alpha,\beta,\gamma,\delta\in\Zbb[\sqrt{6}]$ such that
\begin{align}\label{eq1}
\alpha\beta&=\lambda_6^2=3,\quad \alpha,\beta>0,\nonumber\\
\gamma\delta&=\lambda_4^2=2,\quad \gamma,\delta<0,\\
\alpha/\beta&=\gamma/\delta\nonumber.
\end{align}
In $\Zbb[\sqrt{6}]$ we have the fundamental unit $\varepsilon=5+2\sqrt{6}$, and the unique factorizations $3=\varepsilon(3-\sqrt{6})^2$, 
$2=\varepsilon\m(-2-\sqrt{6})^2$. The equations~\eqref{eq1} imply $6=(\alpha\delta)^2$, so that $\alpha\delta=-\sqrt{6}=(3-\sqrt{6})(-2-\sqrt{6})$. Therefore the general solution of~\eqref{eq1} is
\begin{align*}
\alpha&=\varepsilon^k(3-\sqrt{6}),& \beta&=\varepsilon^{-k+1}(3-\sqrt{6}),\\
\gamma&=\varepsilon^{k-1}(-2-\sqrt{6}),& \delta&=\varepsilon^{-k}(-2-\sqrt{6}).
\end{align*}
Looking for $\alpha/\beta=\varepsilon^{2k-1}$ to approach $1$ as much as possible, we take $k=0$ and set
\begin{align*}
P_\alpha&=\begin{bmatrix}
-1 & 3-\sqrt{6}\\
&1
\end{bmatrix},
&M_\beta&=\begin{bmatrix}
1 & \\
3+\sqrt{6}&-1
\end{bmatrix},\\
P_\gamma&=\begin{bmatrix}
-1 & 2-\sqrt{6}\\
&1
\end{bmatrix},
&M_\delta&=\begin{bmatrix}
1 & \\
-2-\sqrt{6}&-1
\end{bmatrix}.
\end{align*}

Now consider the matrices
\begin{align*}
U_1&=P_\gamma,&U_2&=P_\gamma M_\delta,&U_3&=P_\gamma M_\delta P_\gamma,\\
V_1&=M_\beta,&V_2&=M_\beta P_\alpha,&V_3&=(M_\beta P_\alpha)M_\beta,\\
V_4&=(M_\beta P_\alpha)^2,&V_5&=(M_\beta P_\alpha)^2M_\beta.
\end{align*}
As in~\S\ref{ref17}, the $U_i$'s map $I_0$ to subintervals of $[\infty,0]$, and the $V_j$'s map back these subintervals inside $I_0$. Few minutes of reflection show that the matrices
\begin{align*}
A_1&=V_1U_3,&A_2&=V_1U_2,&A_3&=V_1U_1,\\
A_4&=V_2U_1,&A_5&=V_2U_2,&A_6&=V_2U_3,\\
A_7&=V_3U_3,&A_8&=V_3U_2,&A_9&=V_3U_1,\\
A_{10}&=V_4U_1,&A_{11}&=V_4U_2,&A_{12}&=V_4U_3,\\
A_{13}&=V_5U_3,&A_{14}&=V_5U_2,&A_{15}&=V_5U_1,
\end{align*}
---which have determinant $\pm1$, alternatively--- determine a unimodular partition of $I_0$, whose $15$ intervals are displayed along the right border of the disk model in Figure~\ref{ref3} (right), starting with $I_1=A_1*I_0=[0,5-2\sqrt{6}]$, $I_2=A_2*I_0=[5-2\sqrt{6},(8-3\sqrt{6})/5]$, and ending with $I_{15}=A_{15}*I_0=[1,\infty]$. 
As a consequence, the matrices $A_1,\ldots,A_{15}$ determine a Gauss map $T$, and one easily checks that $\Gamma_T^\pm$ equals $\angles{P_\alpha,M_\beta,P_\gamma,M_\delta}$, which is an index-$2$ subgroup of $\Delta^\pm(4,6,\infty)$. The conditions (1), (2), (3) are thus satisfied. The fact that $\Gamma_T^\pm$ does not contain a reflection as in Lemma~\ref{ref7} corresponds to the fact that $\alpha/\beta=5-2\sqrt{6}$ is not a square in $\Zbb[\sqrt{6}]$, but becomes a square ---namely, $(\sqrt{3}-\sqrt{2})^2$--- in the integer ring of $\Qbb(\sqrt{2},\sqrt{3})$.

The case $(4,12,\infty)$ is analogous; we solve~\eqref{eq1} (with $\lambda_6^2$ replaced by $\lambda_{12}^2=2+\sqrt{3}$) for $\alpha,\beta,\gamma,\delta\in\Zbb[\sqrt{3}]$. Now $2+\sqrt{3}$ is the fundamental unit $\varepsilon$ of $\Zbb[\sqrt{3}]$, while $2=\varepsilon(1-\sqrt{3})^2$. Arguing as above, the general solution to~\eqref{eq1} is $\alpha=\varepsilon^k$, $\beta=\varepsilon^{-k+1}$, $\gamma=\varepsilon^k(1-\sqrt{3})$, $\delta=\varepsilon^{-k+1}(1-\sqrt{3})$. We take $k=0$ and obtain the four matrices
\begin{align*}
P_\alpha&=\begin{bmatrix}
-1 & 1\\
&1
\end{bmatrix},
&M_\beta&=\begin{bmatrix}
1 & \\
2+\sqrt{3}&-1
\end{bmatrix},\\
P_\gamma&=\begin{bmatrix}
-1 & 1-\sqrt{3}\\
&1
\end{bmatrix},
&M_\delta&=\begin{bmatrix}
1 & \\
-1-\sqrt{3}&-1
\end{bmatrix}.
\end{align*}
The matrices $U_1,U_2,U_3$ are defined precisely as in the previous case, while the list of the $V_j$'s is extended by $V_6=(M_\beta P_\alpha)^3$, $V_7=(M_\beta P_\alpha)^3M_\beta$, $\ldots$, $V_{11}=(M_\beta P_\alpha)^5M_\beta$. The products $V_jU_i$, in full analogy with the above, yield matrices $A_1,\ldots,A_{33}$ and a corresponding Gauss map, for which all of the above considerations hold verbatim.

\section{The height on the projective line}\label{ref4}

We recall the definition and the basic properties of the height. Let $K$ be an extension of $\Qbb$ of degree $d$. For each place $P$ of $K$ we denote by 
$\abs{\phantom{a}}_P$ the valuation in~$P$ normalized by $\abs{\alpha}_P=\abs{\sigma(\alpha)}^{e/d}$ (if $P$ is archimedean associated to the embedding $\sigma:K\to\Cbb$, with $e=1$ or $2$ according whether $\sigma$ is real or complex), and by $\abs{p}_P=p^{-d(P|p)/d}$ (if $P$ is nonarchimedean lying over the rational prime $p$, with local degree $d(P|p)=\deg[K_P:\Qbb_p]$). Given $\alpha=[\alpha_1,\alpha_2]\in\PP^1K$, its \newword{absolute height} is
\[
H(\alpha)=\prod_P\max\bigl(\abs{\alpha_1}_P,\abs{\alpha_2}_P\bigr).
\]
The number $H(\alpha)$ is in $\Rbb_{\ge1}$ and does not depend on $K$, nor on the representative chosen for $\alpha$~\cite[Lemmas~1.5.2 and 1.5.3]{bombierigubler06}.
We let $\Obb$ be the ring of algebraic integers in $K$.

\begin{lemma}
Let\label{ref16} $(\alpha_1,\alpha_2)\in K^2\setminus\set{(0,0)}$, let $A\in\PSL_2^\pm\Obb$, and let $P$ be a nonarchimedean prime. Let $\cvect{\beta_1}{\beta_2}=A\cvect{\alpha_1}{\alpha_2}$. Then $\max\bigl(\abs{\alpha_1}_P,\abs{\alpha_2}_P\bigr)=\max\bigl(\abs{\beta_1}_P,\abs{\beta_2}_P\bigr)$.
\end{lemma}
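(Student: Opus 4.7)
The plan is to use two facts in tandem: the ultrametric inequality at the nonarchimedean place $P$, and the fact that $A^{-1}$ is again in $\PSL_2^\pm\Obb$.

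First I would observe that every entry $a$ of $A$ lies in $\Obb$, hence satisfies $\abs{a}_P\le 1$ (this is the defining property of algebraic integers for a nonarchimedean valuation). Writing out $\beta_i = a_{i1}\alpha_1 + a_{i2}\alpha_2$ and applying the ultrametric inequality, one gets
\[
\abs{\beta_i}_P \le \max\bigl(\abs{a_{i1}}_P\abs{\alpha_1}_P,\abs{a_{i2}}_P\abs{\alpha_2}_P\bigr)\le \max\bigl(\abs{\alpha_1}_P,\abs{\alpha_2}_P\bigr),
\]
for $i=1,2$, so the inequality $\max(\abs{\beta_1}_P,\abs{\beta_2}_P)\le\max(\abs{\alpha_1}_P,\abs{\alpha_2}_P)$ is immediate.

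For the reverse inequality, I would use that $\det A=\pm1$, so that the classical adjugate formula gives $A^{-1}\in \PSL_2^\pm\Obb$ as well. Applying the same ultrametric estimate to the equation $\cvect{\alpha_1}{\alpha_2}=A^{-1}\cvect{\beta_1}{\beta_2}$ yields the matching inequality in the opposite direction, and hence equality. No step looks like a real obstacle; the only thing to be careful about is that since we are working with classes in the projective group, rescaling by $\pm1$ does not affect any absolute value, so the statement is well posed at the level of $\PSL_2^\pm\Obb$.
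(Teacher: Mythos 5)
Your proof is correct and is essentially the paper's argument in unpacked form: the paper states that the fractional ideals $\Obb\alpha_1+\Obb\alpha_2$ and $\Obb\beta_1+\Obb\beta_2$ coincide (which is exactly your two-sided use of $A,A^{-1}\in\Mat_2\Obb$) and that for a nonarchimedean $P$ the maximum of $\abs{\cdot}_P$ over generators equals the maximum over the ideal (which is exactly your ultrametric estimate). Nothing is missing.
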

\begin{proof}
The fractional ideals $\Obb\alpha_1+\Obb\alpha_2$ and $\Obb\beta_1+\Obb\beta_2$ coincide. Since $P$ is nonarchimedean, both $\max\bigl(\abs{\alpha_1}_P,\abs{\alpha_2}_P\bigr)$ and $\max\bigl(\abs{\beta_1}_P,\abs{\beta_2}_P\bigr)$ are equal to the maximum of $\abs{\gamma}_P$, for $\gamma$ varying in that fractional ideal.
\end{proof}

We are interested in the behavior of the height along $T$-orbits of points in $K$, where $T,K$ are as in Definition~\ref{ref8}. Let $A\in\PSL^\pm_2\Obb$; if, up to multiplication by~$-1$, all entries of $A$ are greater than or equal to $0$, with at most one entry equal to $0$, then we call $A$ \newword{positive}. If all entries are greater than $0$ we call $A$ \newword{strictly positive}. Since $r\ge2$, the matrices $A_1,A_r$ are positive (but not strictly), while $A_2,\ldots,A_{r-1}$ are strictly positive. When writing positive matrices, we will always tacitly assume that the displayed entries are greater than or equal to $0$.

The case $K=\Qbb$ is trivial but instructive. The height of $\beta=u/v\in\PP^1\Qbb$ is $\abs{u}\lor\abs{v}$ ($\lor$ denoting the $\max$ function), provided that $u,v$ are relatively prime integers. Assume that $0<\beta<\infty$, and let $A$ be a positive matrix in $\PSL_2^\pm\Zbb$; setting $\alpha=A*\beta$, it is then clear that $H(\alpha)>H(\beta)$.
Since the pieces of $T$ are given by the inverse of positive matrices, 
this implies that, for every $\alpha\in\PP^1\Qbb\cap[0,\infty]$, the height of $T(\alpha)$ is strictly less than the height of $\alpha$, unless $T(\alpha)\in\set{0,\infty}$. As the height is a positive integer, it cannot decrease indefinitely; hence the $T$-orbit of every rational point must end up in $\set{0,\infty}$.

By Northcott's theorem~\cite[Theorem~1.6.8]{bombierigubler06}, the number of points of bounded height in any $\PP^1K$ is finite; the argument above would then work in any number field, if only the action of positive matrices in $\PSL_2^\pm\Obb$ were height-increasing. However, this is not the case.

\begin{example}
Let\label{ref14} $K=\Qbb(\sqrt{6})$, and consider the strictly positive matrices
\[
A_9=
\begin{bmatrix}
2 & -1+\sqrt{6}\\
3+\sqrt{6} & 2+\sqrt{6}
\end{bmatrix},\qquad
A_{13}=\begin{bmatrix}
1+\sqrt{6} & 3-\sqrt{6}\\
2+\sqrt{6} & 1
\end{bmatrix},
\]
introduced in~\S\ref{ref18} when treating Case $(4,6,\infty)$.
The matrix $A_9$ is height-increasing: for every $\beta\in K\cap[0,\infty]$, we have $H(A_9*\beta)>H(\beta)$; this will follow from Theorem~\ref{ref13}.

On the other hand, let
\[
\beta=\frac{703-240\sqrt{6}}{380},\qquad
A_{13}*\beta=
\frac{403+83\sqrt{6}}{346+223\sqrt{6}}.
\]
We can check easily that numerator and denominator in the above expression for~$\beta$ are relatively prime in $\Zbb[\sqrt{6}]$, and thus we compute
\[
H(\beta)=\bigl[380(703+240\sqrt{6})\bigr]^{1/2}=
700.3809\cdots>H(A_{13}*\beta)=422.6795\cdots.
\]
The situation is actually worse: as we will see in Theorem~\ref{ref13}, the set of points in $K$ on which $A_{13}$ is height-decreasing is dense in $[0,\infty]$.
\end{example}

We now introduce our main tool. Let $K$ be any real quadratic field with integer ring $\Obb$, denote by $\phantom{a}'$ the nontrivial automorphism of $K$, and let
$A=\bbmatrix{a}{b}{c}{d}\in\PSL_2^\pm\Obb$ be positive.
We define
\begin{align*}
f(x)&=\frac{(ax+b)\lor(cx+d)}{x\lor 1},\\
t&=\bigl(\abs{a'}+\abs{c'}\bigr)\lor\bigl(\abs{b'}+\abs{d'}\bigr)
=\abs{a'+c'}\lor\abs{a'-c'}\lor\abs{b'+d'}\lor\abs{b'-d'},\\
E^\sharp&=\set{x\in[0,\infty]:f(x)>t},\\
E^\natural&=\set{x\in[0,\infty]:f(x)=t},\\
E^\flat&=\set{x\in[0,\infty]:f(x)<t}.
\end{align*}
When we will have more than one matrix around, we will use notations such as $t(A),E^\sharp(A),f_A$ for clarity.

\begin{theorem}
Assuming\label{ref13} the above notation, the following statements hold.
\begin{enumerate}
\item $f$ is nondecreasing on $[0,1]$ and nonincreasing on $[1,\infty]$.
\item $E^\sharp$ is either empty, or an open interval containing~$1$.
\item Either $E^\natural$ contains at most two points, or equals one of the intervals $[0,1]$, $[1,\infty]$.
\item $H(A*\beta)>H(\beta)$ for every $\beta\in E^\sharp\cap K$; the same holds if $A$ is strictly positive and $\beta$ is a boundary point of $E^\sharp$ such that $A*\beta\not=1$.
\item If $E^\natural$ is an interval, then 
$\set{\beta\in E^\natural\cap K:
H(A*\beta)=H(\beta)}$ is dense in $E^\natural$.
\item $\set{\beta\in E^\flat\cap K:
H(A*\beta)<H(\beta)}$ is dense in $E^\flat$.
\end{enumerate}
\end{theorem}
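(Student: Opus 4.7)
The plan is to reduce everything to a single formula for the height ratio. Writing $\beta=\beta_1/\beta_2\in\PP^1K$ with $\beta_1,\beta_2\in\Obb$, Lemma~\ref{ref16} cancels every nonarchimedean contribution, and the two real places of $K$ then give
\[
\frac{H(A*\beta)}{H(\beta)} \;=\; \bigl(f(\beta)\cdot g(\beta')\bigr)^{1/2},
\]
where $g(x)=\bigl[\abs{a'x+b'}\vee\abs{c'x+d'}\bigr]\big/\bigl[\abs{x}\vee 1\bigr]$ is the Galois-conjugate analogue of~$f$. The core estimate I will extract is $g(x)\ge 1/t$ for every $x\in\Rbb$: from the two determinant identities $\pm 1=a'(c'x+d')-c'(a'x+b')$ and $\pm x=d'(a'x+b')-b'(c'x+d')$, the triangle inequality together with the definition of~$t$ gives the bound, and the equality analysis shows that $g(x)=1/t$ forces $\abs{a'x+b'}=\abs{c'x+d'}$, i.e.\ $A'*x=\pm 1$, provided all four entries $a',b',c',d'$ are nonzero.

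Parts (1)--(3) are then structural. (1) is immediate from $a,b,c,d\ge 0$ applied on each half separately. (2) follows from (1) plus continuity of~$f$. For (3), on each of $[0,1]$ and $[1,\infty]$ the function $f$ is the max of two affine pieces, so $\set{f=t}$ is at most a point there unless one of the pieces is constant, i.e.\ one of $a,c$ (respectively $b,d$) vanishes. Assuming $a=0$, the relation $\det A=-bc=\pm 1$ forces $b,c$ to be inverse units of~$\Obb$, the flat stretch of~$f$ on~$[0,1]$ is $[0,(b-d)/c]\cap[0,1]$, and the delicate step is ruling out $(b-d)/c\in(0,1)$. This is the diophantine heart of the argument: setting $e:=b-d\in\Obb\setminus\set{0}$, the requirement $t=b$ yields $\abs{d'}\le b-c$ and hence $\abs{e'}\le\abs{b'}+\abs{d'}\le c+(b-c)=b$, whereas $\abs{e}<c$ combined with $\abs{N(e)}\ge 1$ forces $\abs{e'}\ge 1/\abs{e}>1/c=b$, contradicting the previous bound. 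So the flat piece must fill all of $[0,1]$, giving $E^\natural=[0,1]$; the symmetric argument on $[1,\infty]$ handles the other case.

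Part~(4) is then short. In the interior, $f(\beta)>t$ and $g(\beta')\ge 1/t$ give $f(\beta)g(\beta')>1$, hence $H(A*\beta)>H(\beta)$. On the boundary, $f(\beta)=t$, so the ratio equals~$1$ only if $g(\beta')=1/t$; strict positivity of~$A$ ensures all $a',b',c',d'$ are nonzero, so by the equality analysis $A'*\beta'=\pm 1$, whence $(A*\beta)'=\pm 1$ and $A*\beta\in\set{\pm 1}$. Positivity of~$A$ together with $\beta\in[0,\infty]$ forces $A*\beta\ge 0$, so $A*\beta=1$, contradicting the hypothesis. For (5) and (6) I will use that $K$, embedded in $\Rbb^2$ via $\alpha\mapsto(\alpha,\alpha')$, is dense (since $\Obb$ is a rank-$2$ lattice in $\Rbb^2$). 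In~(5), the $a=0$ normal form from~(3) makes $\set{x:g(x)=1/t}$ a nondegenerate open interval, and density of~$K$ produces $\beta\in E^\natural\cap K$ with $\beta'$ in this interval arbitrarily close to any prescribed point of~$E^\natural$. In~(6), for $\beta_0\in E^\flat$ I choose $\delta>0$ with a neighborhood $U$ of $\beta_0$ on which $f<t-\delta$; since $1/(t-\delta)>1/t=\inf g$, the open set $\set{x:g(x)<1/(t-\delta)}$ is nonempty, and density supplies $\beta\in K\cap U$ with $\beta'$ there, whence $f(\beta)g(\beta')<1$.

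The main obstacle I anticipate is the diophantine estimate in~(3): without it, nothing would prevent the flat pieces of $f$ from meeting $E^\natural$ on proper sub-intervals of $[0,1]$ or $[1,\infty]$, and the statement of~(3) would fail. Everything else follows mechanically from the master identity $H(A*\beta)/H(\beta)=\sqrt{f(\beta)g(\beta')}$ together with the bound $g\ge 1/t$.
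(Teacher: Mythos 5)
Your proposal is correct and follows the same skeleton as the paper's proof: the reduction via Lemma~\ref{ref16} to the single archimedean criterion $f(\beta)\,g(\beta')\gtreqqless 1$, the normalization putting the possible zero entry at $a$, the norm argument $\abs{N(b-d)}<1$ for part~(3), the identification of the exceptional boundary points with the conjugates of $A\m*(\pm1)$ in part~(4), and the density of $\alpha\mapsto(\alpha,\alpha')$ (the paper invokes Artin--Whaples) for parts~(5) and~(6). The one genuinely different ingredient is your derivation of the central bound $g\ge 1/t$: you get it in two lines from the identities $\pm1=a'(c'x+d')-c'(a'x+b')$ and $\pm x=d'(a'x+b')-b'(c'x+d')$ plus the triangle inequality, whereas the paper reaches the equivalent statement geometrically, by showing that the truncated-V graph of $y\mapsto f(x)\bigl(\abs{a'y+b'}\lor\abs{c'y+d'}\bigr)$ has its two cornerpoints in the epigraph of $\abs{y}\lor1$ and outer slopes of absolute value $>1$. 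Your route is shorter and makes the equality case (equality in the triangle inequality forces $\abs{a'x+b'}=\abs{c'x+d'}$, hence $A'*x=\pm1$, when all conjugate entries are nonzero) transparent; the paper's route has the advantage that the cornerpoints' coordinates are computed explicitly, which is what makes parts~(5) and~(6) immediate. The only item you should make explicit is the assertion $\inf g=1/t$ used in~(6): it does not follow formally from the lower bound alone, but is verified by evaluating $g$ at the crossing points $-(b'+d')/(a'+c')$ and $(b'-d')/(c'-a')$, where $g$ takes the values $1/\bigl(\abs{a'+c'}\lor\abs{b'+d'}\bigr)$ and $1/\bigl(\abs{a'-c'}\lor\abs{b'-d'}\bigr)$, the smaller of which is $1/t$; this is exactly the paper's observation that for $m<t$ at least one cornerpoint drops strictly below the graph of $\abs{y}\lor1$.
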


Assuming Theorem~\ref{ref13}, let
us justify our claims in Example~\ref{ref14}. We have
\[
t(A_9)=-1+\sqrt{6}<\min f_{A_9}=f_{A_9}(0)\land f_{A_9}(\infty)=2+\sqrt{6};
\]
hence $E^\sharp(A_9)=[0,\infty]$ and the statement~(4) applies.
On the other hand,
\[
t(A_{13})=4+\sqrt{6}>\max f_{A_{13}}=f_{A_{13}}(1)=3+\sqrt{6};
\]
hence $E^\sharp(A_{13})=E^\natural(A_{13})=\emptyset$ and $E^\flat(A_{13})=[0,\infty]$, so (6) applies; see Figure~\ref{fig4} (right).

\begin{proof}[Proof of Theorem~\ref{ref13}]
Since $a,b,c,d\ge0$, the statements (1) and (2) are clear. The function $f$ is best visualized by precomposing it with the order-preserving homeomorphism $\varphi:[0,2]\to[0,\infty]$ which is the identity on $[0,1]$ and equals $(2-x)\m$ on $[1,2]$. Then $f\circ\varphi$ is piecewise-linear, namely
\[
(f\circ\varphi)(x)=\begin{cases}
(ax+b)\lor(cx+d),&\text{on $[0,1]$};\\
(-bx+a+2b)\lor(-dx+c+2d),&\text{on $[1,2]$}.
\end{cases}
\]
We have $f(0)=b\lor d$, $f(\infty)=a\lor c$, and $f(1)=(a+b)\lor(c+d)=\max f$. Figure~\ref{fig4} (left) shows $f\circ\varphi$ and $t$ (dashed) for the matrix
\[
A_2A_{10}^9=\begin{bmatrix}
1 & 10+9\sqrt{3}\\
\sqrt{3} & 28+10\sqrt{3}
\end{bmatrix},
\]
where $A_2,A_{10}$ are the matrices introduced in~\S\ref{ref17} when treating Case $(3,6,\infty)$. The dashed vertical segments mark the intervals $I_1,\ldots,I_{10}$ of that case. Figure~\ref{fig4} (right) 
is the analogous one for the matrix $A_{13}$ of Example~\ref{ref14}.
\begin{figure}[h!]
\includegraphics[width=5.5cm]{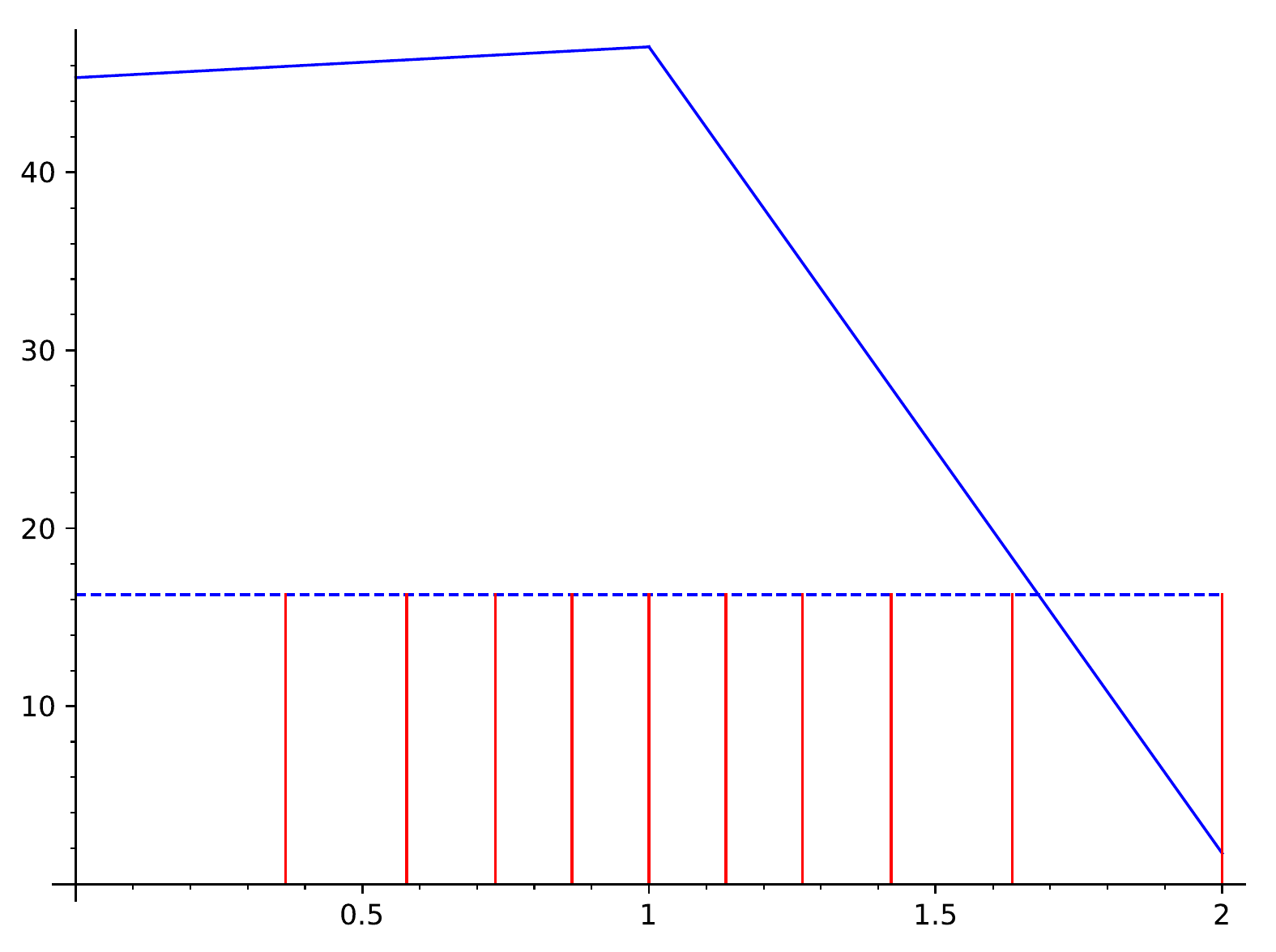}
\hspace{0.6cm}
\includegraphics[width=5.5cm]{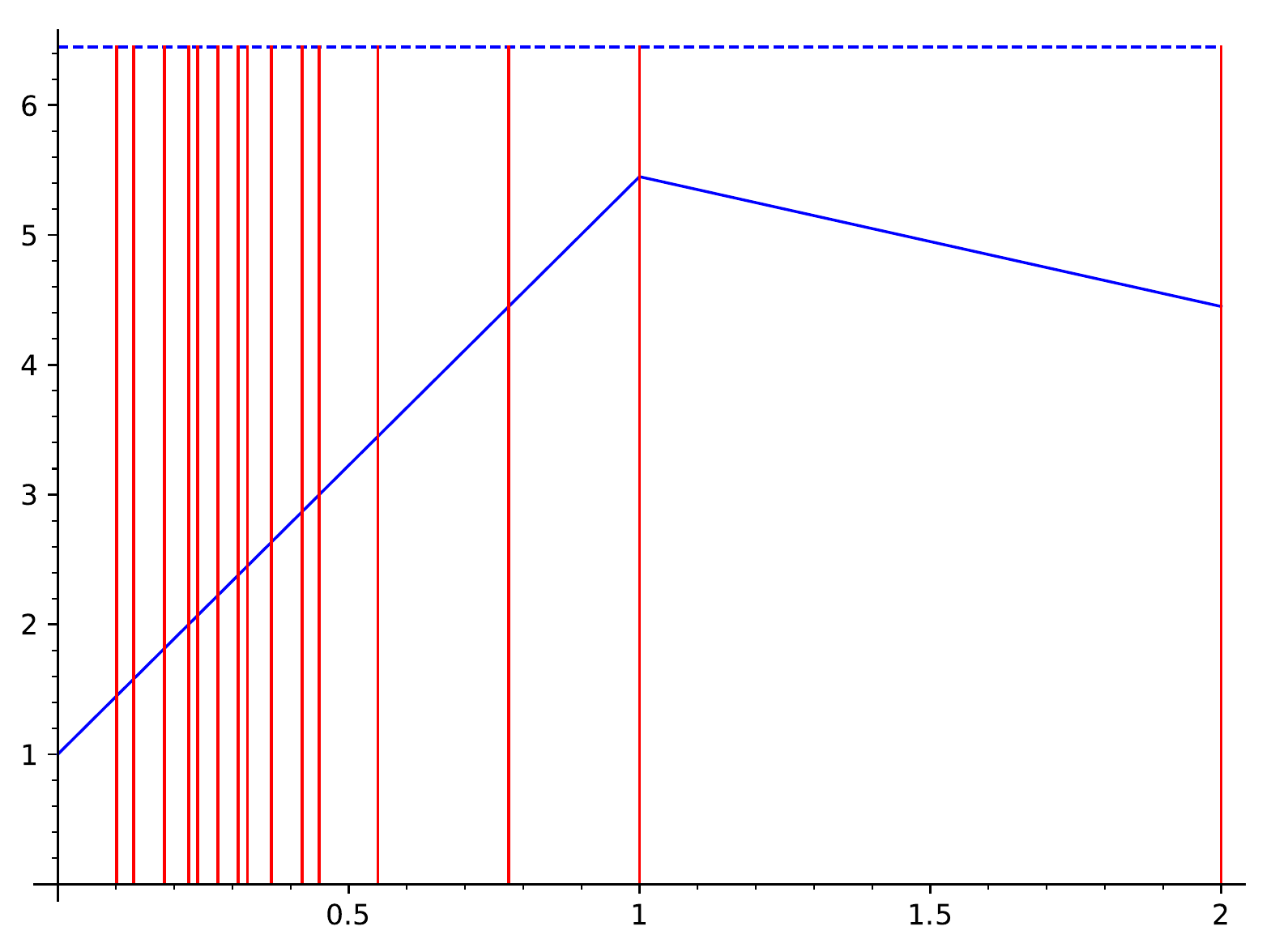}
\caption{Superimposed graphs of $f_A\circ\varphi$ and $t(A)$.\\
Left: $A=A_2A_{10}^9$ of
\S\ref{ref17}; right: $A=A_{13}$ of Example~\ref{ref14}.}
\label{fig4}
\end{figure}

We formulate two remarks.
\begin{itemize}
\item Exchanging the rows of $A$ does not modify $f$ and $t$, and corresponds to postcomposing $A$ with the involution $x\leftrightarrow1/x$, which leaves the height invariant. On the other hand, exchanging the columns does not modify~$t$, and transforms $f(x)$ in $f(1/x)$ (from the point of view of $f\circ\varphi$, it is a reflection in the $x=1$ axis). It corresponds to precomposing $A$ with $x\leftrightarrow1/x$; this simply exchanges $E^\sharp,E^\natural,E^\flat$ with their images under the involution, unaffecting our claims. 
\item The identities $\abs{a'}=\abs{c'}$ and $\abs{b'}=\abs{d'}$
cannot hold simultaneously. This is clear if one of the entries of $A$ is $0$. Otherwise, assume by contradiction that $A$ is strictly positive and both identities hold. Then we have
\begin{align*}
1&=a'd'-b'c'\\
&=\sgn(a'd')\abs{a'}\abs{d'}-\sgn(b'c')\abs{b'}\abs{c'}\\
&=\bigl(\sgn(a'd')-\sgn(b'c')\bigr)\abs{a'}\abs{b'},
\end{align*}
which is impossible, since $\sgn(a'd')-\sgn(b'c')\in\set{-2,0,2}$ and 
$a'b'\not=\pm 1/2$ (because $a',b'$ are algebraic integers).
\end{itemize}
We therefore assume, without loss of generality, that $\abs{a'}<\abs{c'}$ and that $a$ is the only entry of $A$ which can possibly be~$0$.

We prove~(3). If $a\not=0$, then $f$ is strictly increasing on $[0,1]$ and strictly decreasing on $[1,\infty]$; hence $E^\natural$ contains at most $2$ points. 
Otherwise, $A$ has the form $\bbmatrix{}{b}{b\m}{d}$, for some unit~$b\in\Obb$.
Since $E^\natural$ contains $3$ or more points, it contains an interval, that has necessarily the form $[0,r]$, for some $0<r\le1$; indeed, such an interval is the only region where $f$ has derivative~$0$. Looking for a contradiction, we assume $r<1$; then
\[
d<b\m r+d=f(r)=b=t=\abs{(b\m)'}\lor\bigl(\abs{b'}+\abs{d'}\bigr)<f(1)=b\lor(b\m+d).
\]
As $\abs{b'}=b\m$, we obtain
\[
d<b=b\lor\bigl(b\m+\abs{d'}\bigr)<b\lor(b\m+d),
\]
from which we orderly derive
\begin{gather*}
d<b<b\m+d,\\
b\m<b\m+\abs{d'}\le b,\\
b-b\m<d<b,\\
-b+b\m\le d'\le b-b\m,\\
-b\m<d-b<0,\\
-b+b\m-b'\le d'-b'\le b-b\m-b'.
\end{gather*}
Now, $b\m$ equals either $b'$ or $-b'$; in both cases the last line in the above chain yields $\abs{d'-b'}\le b$. Therefore,
writing $N$ for the norm function,
$\bigl\vert N(d-b)\bigr\vert=\abs{d-b}\cdot\abs{d'-b'}<b\m b=1$; as $d-b\in\Obb\setminus\set{0}$ and the norm takes integer values, this is impossible and concludes the proof of~(3).

Let now $\beta$ range in $K\cap[0,\infty)$.
We have $A\cvect{\beta}{1}=\cvect{a\beta+b}{c\beta+d}$; by Lemma~\ref{ref16},
\[
H(A*\beta)\gel H(\beta)
\]
if and only if
\[
\bigl(\abs{a\beta+b}\lor\abs{c\beta+d}\bigr)
\bigl(\abs{a'\beta'+b'}\lor\abs{c'\beta'+d'}\bigr)\gel
\bigl(\abs{\beta}\lor1\bigr)\bigl(\abs{\beta'}\lor1\bigr)
\]
if and only if (since $\beta,a,b,c,d\ge0$)
\begin{equation}\label{eq3}\tag{$*$}
f(\beta)\bigl(\abs{a'\beta'+b'}\lor\abs{c'\beta'+d'}\bigr)\gel\abs{\beta'}\lor 1.
\end{equation}
We note that the corresponding condition for $\beta=\infty$ is
\begin{equation}\label{eq4}\tag{$**$}
f(\infty)\bigl(\abs{a'}\lor\abs{c'}\bigr)\gel 1.
\end{equation}

For a given $x\in[0,\infty]$, 
we denote by $g_x(y)$ the function of the real variable $y$ defined by
\[
g_x(y)=f(x)\bigl(\abs{a'y+b'}\lor\abs{c'y+d'}\bigr).
\]
Since $\abs{a'}<\abs{c'}$, the
graph of $g_x$ has a truncated V shape, with two outer halflines whose slope has absolute value $f(x)\abs{c'}$, and an inner segment whose slope has absolute value $f(x)\abs{a'}$.
These affine linear parts are connected by two distinct \newword{cornerpoints},
which we compute by solving the systems
\[
\begin{cases}
f(x)a'y+f(x)b'=z\\
f(x)c'y+f(x)d'=z
\end{cases}
\quad
\begin{cases}
-f(x)a'y-f(x)b'=z\\
f(x)c'y+f(x)d'=z
\end{cases}
\]
in the variables $(y,z)$ ---say, via the Cramer rule--- and then taking the absolute value of the resulting $z$. After the computation, the cornerpoints turn out to be
\[
q_1=\biggl(\frac{b'-d'}{-a'+c'},\frac{f(x)}{\abs{-a'+c'}}\biggr),
\quad
q_2=\biggl(\frac{-b'-d'}{a'+c'},\frac{f(x)}{\abs{a'+c'}}\biggr).
\]

\paragraph{\emph{Claim}} Let $\beta$ belong to the closure of $E^\sharp$, and let $q_1,q_2$ be the cornerpoints of $g_\beta$; then the following hold.
\begin{itemize}
\item[(i)] The absolute value $f(\beta)\abs{c'}$
of the slopes of the outer halflines of $g_\beta$ is greater than $1$.
\item[(ii)] $q_1$ and $q_2$ belong to the \newword{epigraph}
of $\abs{y}\lor1$, namely the convex region
$\set{(y,z):y+z\ge0,z\ge1,-y+z\ge0}$. If $\beta\in E^\sharp$, then they belong to the \newword{strict epigraph} (defined analogously, with $>$ replacing $\ge$) and hence, by~(i), the entire graph of $g_\beta$ lies in the strict epigraph of $\abs{y}\lor1$.
\item[(iii)] If $a\not=0$ then $q_1,q_2$ do not lie in the same affine linear part of the graph of $\abs{y}\lor1$.
\end{itemize}
\paragraph{\emph{Proof of Claim}} (i) From the triangle inequality we get
\begin{align*}
1=a'd'-b'c'&\le\abs{a'd'}+\abs{b'c'}\\
&<\abs{c'}\bigl(\abs{d'}+\abs{b'}\bigr)\\
&\le\abs{c'}t\\
&\le\abs{c'}f(\beta).
\end{align*}

\noindent (ii) The cornerpoint $q_1$ belongs to the epigraph of $\abs{y}\lor1$
if and only if $\pm(b'-d')+f(\beta)\ge0$ and $f(\beta)\ge\abs{-a'+c'}$,
and this condition amounts to $f(\beta)\ge\abs{a'-c'}\lor\abs{b'-d'}$. Analogously, 
$q_2$ belongs to the epigraph if and only if $f(\beta)\ge\abs{a'+c'}\lor\abs{b'+d'}$.
By our assumptions on $\beta$ we have $f(\beta)\ge t$, and the above inequalities hold by definition of $t$. If $\beta\in E^\sharp$ then $f(\beta)<t$ and the inequalities are strict.

\noindent (iii) Suppose that $q_1,q_2$ lie on $\set{-y+z=0}$. Then $\abs{b'-d'}=f(\beta)=\abs{b'+d'}$, which is impossible since $b,d$ are both different from $0$. The same argument works for the linear parts along $\set{z=1}$ and $\set{y+z=0}$. \qed

We now prove (4). For $\beta=\infty$, the greater-than inequality in~\eqref{eq4} is just~(i) in the Claim; let then $\beta\not=\infty$.
If $\beta\in E^\sharp$ then, by (ii) in the Claim,
$g_\beta(y)>\abs{y}\lor1$ for every $y\in\Rbb$. In particular, $g_\beta(\beta')>\abs{\beta'}\lor1$, which amounts to the greater-than inequality in~\eqref{eq3}. Assume that $A$ is strictly positive and that $\beta$ is a boundary point of $E^\sharp$ with $A*\beta\not=1$; then $f(\beta)=t$. By (i) and (iii) of the Claim, the graph of $g_\beta$ is contained in the strict epigraph of $\abs{y}\lor1$, with the exception of at least one and at most two of $q_1,q_2$. The first coordinates of these cornerpoints are $(A\m*1)'$ and $\bigl(A\m*(-1)\bigr)'$. Neither of these can be $\beta'$, since otherwise we would have $A*\beta=1$ ---which is excluded by hypothesis--- or $A*\beta=-1$ ---which is impossible---. Therefore $g_\beta(\beta')>\abs{\beta'}\lor1$ holds as well and (4) is established.

We prove (5); let $J\subseteq E^\natural=[0,1]$ be an interval of positive length. By the proof of (3), $f$ has value $b$ on $J$. Thus, for every $x\in J$, the cornerpoints of $g_x$ are
\[
q_1=\biggl(\frac{b'-d'}{N(b)b},1\biggr),
\quad
q_2=\biggl(\frac{-b'-d'}{N(b)b},1\biggr).
\]
Indeed, under our hypotheses, $a$ equals $0$ and $b$ is a unit, so that $c'=(b\m)'=(b\m)'bb\m=N(b\m)b=N(b)b$.
Since $\abs{b'-d'}\lor\abs{-b'-d'}\le t=b$, the interval $J'$ of endpoints $\frac{b'-d'}{N(b)b}$ and $\frac{-b'-d'}{N(b)b}$ is contained in $[-1,1]$ and has positive length. By the Artin-Whaples approximation theorem~\cite[Chapter~XII, Theorem~1.2]{lang},
there exists $\beta\in K$ such that $\beta\in J$ and $\beta'\in J'$. Therefore $g_\beta(\beta')=1=\abs{\beta'}\lor1$, which means equality in~\eqref{eq3} and establishes (5).

Finally, let $J\subseteq E^\flat$ be an interval of positive length; by possibly shrinking it, we find $m<t$ such that $f\le m$ on $J$. By definition of $t$, either $m<\abs{a'+c'}\lor\abs{a'-c'}$ or $m<\abs{b'+d'}\lor\abs{b'-d'}$ (or both). In any case, at least one of the cornerpoints of the graph of $m\bigl(\abs{a'y+b'}\lor\abs{c'y+d'}\bigr)$ is strictly below the graph of $\abs{y}\lor1$; in particular, there exists an interval of positive length $J'$ such that $g_x(y)<\abs{y}\lor1$ for every pair $(x,y)\in J\times J'$. Again by the Artin-Whaples theorem, there exists $\beta\in K$ with $(\beta,\beta')\in J\times J'$; hence $g_\beta(\beta')<\abs{\beta'}\lor1$ and $H(A*\beta)<H(\beta)$. This establishes (6) and concludes the proof of Theorem~\ref{ref13}.
\end{proof}

\section{Decreasing blocks}\label{ref5}

Let $T$ and $K$ be a Gauss map and a number field as in Definition~\ref{ref8}. For a ``generic'' input $\alpha\in K$, the height along the $T$-orbit of $\alpha$ is ``generically decreasing''. This is of course a vague statement, but is exemplified in Figure~\ref{fig5} (left), which shows the logarithmic height (i.e., the logarithm of the height) along the first $180$ points in the $T$-orbit of~$\alpha$, where
\begin{itemize}
\item
$T$ is the map introduced in~\S\ref{ref17} for the $(3,4,\infty)$ case; \item $\alpha$ is the rational number
\[
\alpha=\frac{17872569142100116747518989441504411}{5689015449433817447221222422292025},
\]
whose only ---as far as we know--- claim to glory is being the $66$th convergent of the ordinary c.f.~expansion of~$\pi$.
\end{itemize}
In this final section we will give a precise formulation to property~(H), and prove it for all the algorithms in \S\ref{ref17}, \S\ref{ref6}, \S\ref{ref18}. This will remove, in a quite effective way, all vagueness from the above assertion. Note that, as remarked in~\S\ref{ref1}, the validity of~(H) implies that any point in $K$ must end its orbit in a parabolic fixed point; explicit computation shows that indeed $\alpha$ lands at~$\infty$ at the $656$th iterate of~$T$.
\begin{figure}[h!]
\includegraphics[width=5.8cm]{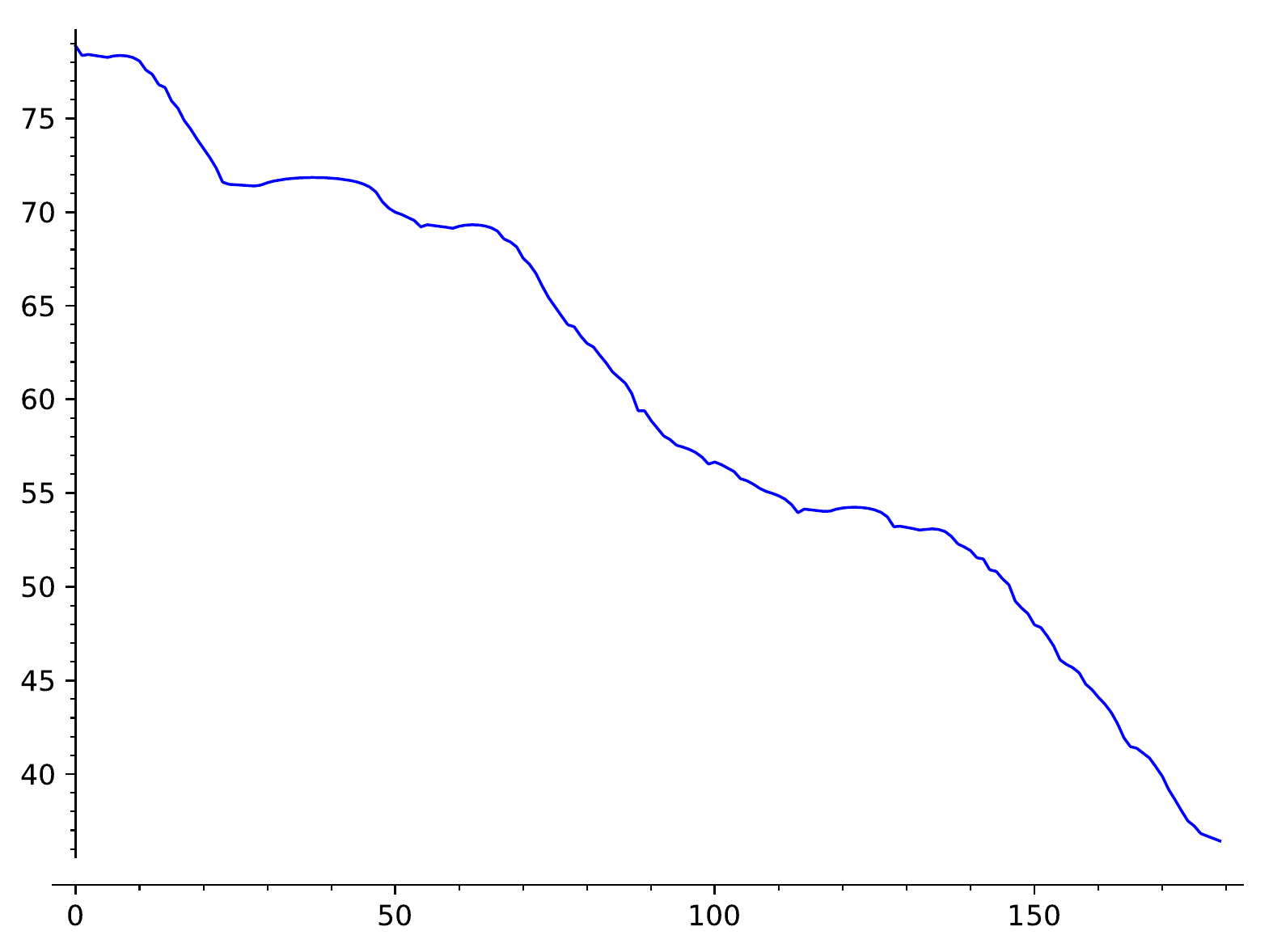}
\quad
\includegraphics[width=5.8cm]{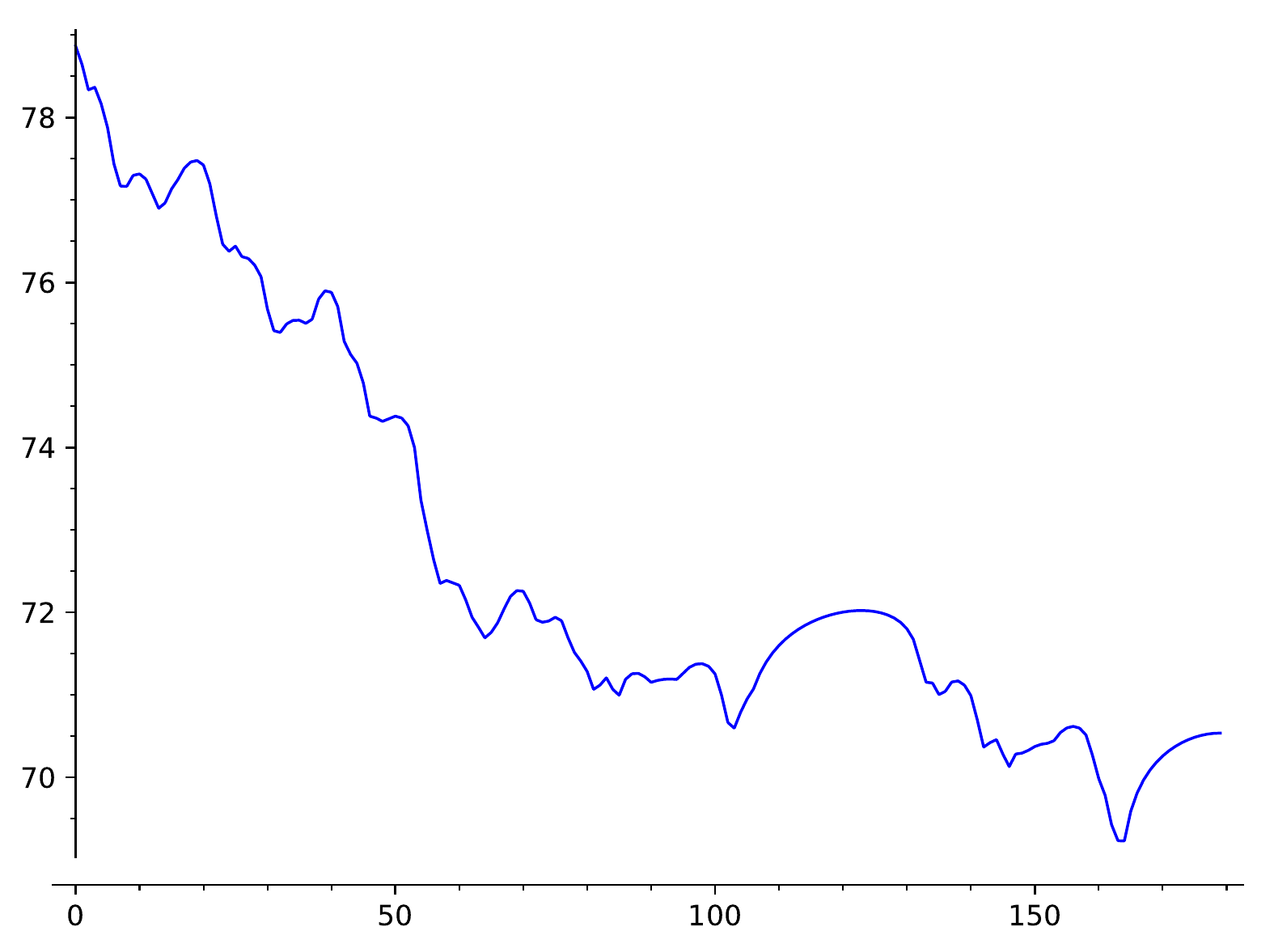}
\caption{Logarithmic heights along an orbit.\\
A quadratic (left) vs.~a cubic case (right).}
\label{fig5}
\end{figure}

It is interesting to compare the behavior of the previous quadratic $T$ with that of a cubic map. 
Let $\lambda_7=2\cos(\pi/7)$, whose minimal polynomial is
$x^3-x^2-2x+1$; again the maximal order $\Zbb[\lambda_7]$ in $\Qbb(\lambda_7)$ has class number~$1$.
Precisely as in~\S\ref{ref17} for the case~$(2,5,\infty)$, we define a Gauss map $T'$ via the matrices
\[
R_7Q_2F,\;\;R_7^2Q_2,\;\;R_7^3Q_2F,\;\;R_7^4Q^2,\;\;R_7^5Q_2F,\;\;R_7^6Q_2,
\]
which generate a copy of the extended Hecke group $\Delta^\pm(2,7,\infty)$. The graphs of $T$ and $T'$ have the same structure; they are both saw-like, with $6$ pieces and $\infty$ as the only parabolic fixed point (similar to Figure~\ref{ref1} (right), but with the leftmost piece deleted).
Figure~\ref{fig5} (right) shows the behavior of the logarithmic height along the first $180$ points of the $T'$-orbit of the same input $\alpha$. The decreasing drift is clearly much more erratic, and indeed $\alpha$ takes $4583$ steps to reach $\infty$. Note that, as far as we know, there is no a priori guarantee that $\alpha$ should land at $\infty$, nor that it is a cusp of $\Delta(2,7,\infty)$. Indeed, property (H) definitely fails: the point $\lambda_7^2-\lambda_7-1$ (which has minimal height greater than $1$ in $K=\Qbb(\lambda_7)$) is a fixed point for $T'$, because it is in $I_1$ and is fixed by $A_1=R_7Q_2F=\bbmatrix{}{1}{1}{\lambda_7}$. As it is fixed by the hyperbolic $A_1^2$, it is not a cusp of $\Delta(2,7,\infty)$.
This is an instance of a fixed point for a so-called special hyperbolic matrix, which corresponds to an affine pseudo-Anosov diffeomorphism with direction of vanishing SAF invariant;
see~\cite[\S5]{arnouxschmidt09} and references therein.

Given a Gauss map $T$ as in Definition~\ref{ref8}, we regard $\Afrak=\set{1,\ldots,r}$ as a finite alphabet; as usual, $\Afrak^*$ denotes the set of all finite words over $\Afrak$, while $\Afrak^\omega$ is the space of all one-sided infinite sequences.
For every $\alpha\in[0,\infty]$, a \newword{$T$-symbolic orbit} for $\alpha$ is a sequence $\abf=a_0a_1\cdots\in\Afrak^\omega$ such that $T^k(\alpha)\in I_{a_k}$, for $k\ge0$.
Every point has at least one and at most two symbolic orbits; it has two if and only if it eventually lands at an endpoint of one of $I_2,\ldots,I_{r-1}$ (so that, at the next step, gets mapped either to $0$ or to $\infty$).
Note that this ambiguity is in the nature of things; if the map $\abf\mapsto\alpha$ were injective then, since it it surjective and obviously continuous, it would constitute a homeomorphism from the Cantor space $\Afrak^\omega$ to $[0,\infty]$. 
The symbolic orbit $\overline{r}$ of $\infty$ is however unique, and so is that of $0$, namely $\overline{1}$ or $1\overline{r}$ according whether $\det A_1$ equals $+1$ or $-1$.

The following result generalizes~\cite[Lemma~2.2.15]{smillieulcigrai11},
as well as~\cite[Observation~3]{panti09}, to arbitrary extended fuchsian groups; note that the proof uses a different technique.

\begin{theorem}
For\label{ref15} every $\abf\in\Afrak^\omega$, the nested strictly decreasing sequence of $\Gamma^\pm$-unimodular intervals
\[
I_0\supset A_0*I_0\supset A_0A_1*I_0\supset A_0A_1A_2*I_0\supset\cdots
\]
shrinks to a singleton $\set{\alpha}$. The point $\alpha$ is the unique point having symbolic orbit~$\abf$.
\end{theorem}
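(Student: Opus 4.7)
The strict nesting is immediate: since $B_{k+1}=B_k A_{a_k}$ and $A_{a_k}*I_0=I_{a_k}\subsetneq I_0$, we get $B_{k+1}*I_0=B_k*I_{a_k}\subsetneq B_k*I_0$. Compactness of $\PP^1\Rbb$ then gives a nonempty closed limit interval $J=\bigcap_k B_k*I_0$. The uniqueness assertion reduces to showing $J$ is a singleton: unfolding the relation $T^n(\alpha)\in I_{a_n}$ by the identities $T=A_{a_n}\m$ on $I_{a_n}$ yields $\alpha\in B_k*I_0$ for every $k$, so every point with symbolic orbit $\abf$ lies in $J$.

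The plan to prove $|J|=1$ is by contradiction, using proper discontinuity of $\Gamma^\pm$ on $\Hcal$. Assume $p,q\in J$ are distinct, and let $\delta\subset\Hcal$ be the hyperbolic geodesic with endpoints $p,q$. Since $B_k*\Hcal^+$ is the convex hull in $\overline\Hcal$ of its ideal boundary $B_k*I_0\supset\set{p,q}$, the geodesic $\delta$ lies inside $B_k*\Hcal^+$, equivalently $B_k\m*\delta\subset\Hcal^+$, for every $k$. A preliminary check: the $B_k$ are pairwise distinct elements of $\Gamma^\pm_T$, because $B_k=B_{k'}$ with $k<k'$ would force the word $A_{a_k}\cdots A_{a_{k'-1}}$ to be the identity, whereas it in fact maps $I_0$ into a proper subinterval.

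Fixing $z_0\in\delta$, proper discontinuity of $\Gamma^\pm$ forces the distinct translates $B_k\m*z_0$ to leave every compact subset of $\Hcal$; staying inside $\Hcal^+$, their accumulation points lie in $\overline{\Hcal^+}\cap\partial\Hcal=I_0$. Passing to a subsequence, $B_k\m*z_0\to\bar\alpha\in I_0$. Picking a second point $z_0'\in\delta$, the pair $B_k\m*z_0,B_k\m*z_0'$ remains at fixed hyperbolic distance $d(z_0,z_0')$ on the varying geodesics $B_k\m*\delta\subset\Hcal^+$; a standard lemma of hyperbolic geometry (two points at bounded mutual distance on a degenerating sequence of geodesics that both escape to $\partial\Hcal$ must share their ideal limit) yields $B_k\m*z_0'\to\bar\alpha$ as well. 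Letting $z_0,z_0'$ tend to $p,q$ along $\delta$ and invoking continuity in the ideal boundary, we conclude that along the subsequence both $B_k\m*p$ and $B_k\m*q$ converge to the same $\bar\alpha\in I_0$.

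To close the argument I would convert this collapse into a quantitative matrix estimate. Writing $B_k=\bbmatrix{a_k}{b_k}{c_k}{d_k}$, a direct computation shows that the chordal length of $B_k*I_0=[b_k/d_k,a_k/c_k]$ equals $\bigl(1+(a_kb_k+c_kd_k)^2\bigr)^{-1/2}$, so the nondegeneracy of $J$ is equivalent to $\abs{a_kb_k+c_kd_k}$ remaining bounded. Discreteness of $\Gamma^\pm$ together with the distinctness of the $B_k$ gives $\|B_k\|_F\to\infty$, and the sign structure forced by $B_k*I_0\subset I_0$ (which allows normalizing the matrix entries to be nonnegative) rules out the cancellation needed to keep $a_kb_k+c_kd_k$ bounded, producing the contradiction. \textbf{The main obstacle} is precisely this final quantitative step: propagating the qualitative collapse $B_k\m*\set{p,q}\to\set{\bar\alpha}$ into the lower bound $\abs{a_kb_k+c_kd_k}\to\infty$, since naive matrix-norm growth alone does not control this cross-term, and one must carefully exploit the positivity inherited from the nested inclusions $B_k*I_0\subset I_0\subset[0,\infty]$ and the fact that $B_k$ does not fix $\set{0,\infty}$ pointwise (by Lemma~\ref{ref7}(1)).
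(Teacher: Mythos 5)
You set up the preliminaries correctly: the strict nesting, the pairwise distinctness of the $B_k$, and the reduction to showing that $J=\bigcap_k B_k*I_0$ is a singleton all match the paper, and you correctly identify proper discontinuity as the decisive hypothesis. But the core step is missing, as you yourself flag. Two separate problems. First, the geodesic-collapse route cannot produce a contradiction even if the limit interchange were justified: the convergence of two interior points of $B_k\m*\delta$ at fixed mutual distance to a common boundary point says nothing about the ideal endpoints $B_k\m*p$, $B_k\m*q$ (on the imaginary axis, $i\varepsilon$ and $2i\varepsilon$ both tend to $0$ while the endpoints remain $0$ and $\infty$), and even if both endpoints did converge to the same $\bar\alpha\in I_0$, that is perfectly consistent with $p\ne q$ — so no contradiction is in sight. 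Second, the quantitative fallback is precisely where all the content lies, and the asserted implication ``positivity plus $\|B_k\|\to\infty$ forces $\abs{a_kb_k+c_kd_k}\to\infty$'' is false without further input: the matrices $\bbmatrix{\epsilon^k}{}{}{\epsilon^{-k}}$, with $\epsilon$ a fundamental unit, have nonnegative entries, unbounded norm, and $a_kb_k+c_kd_k=0$. (They are excluded here because they fix $I_0$ setwise, but this shows the step you label ``the main obstacle'' requires a genuine argument that the proposal does not supply.)

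The paper closes the gap with a compactness device you do not use. Write $J=[\alpha,\beta]$ and suppose $\alpha\ne\beta$; choose a subsequence with $B_{k_l}*0\to\alpha$ and $B_{k_l}*\infty\to\beta$. By compactness of $\PP\Mat_2\Rbb\cong\PP^3\Rbb$, pass to a further subsequence with $[B_{k_{l_j}}]\to[C]$ for some nonzero $C\in\Mat_2\Rbb$. If $C$ were singular its projective image would be a single point, forcing $\alpha=\beta$; hence $C$ is invertible and acts as an isometry of $\Hcal$. Then $B_{k_{l_j}}*i\to C*i\in\Hcal$, so infinitely many distinct elements of $\Gamma^\pm$ map $i$ into a compact neighbourhood of $C*i$, contradicting proper discontinuity. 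This is the missing idea; with it the proof is a few lines, and neither the geodesic $\delta$ nor any entrywise estimate on $B_k$ is needed.
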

\begin{proof}
The second statement is an obvious consequence of the first. The intersection of the above sequence of intervals is surely a closed interval $[\alpha,\beta]$, with $0\le\alpha\le\beta\le\infty$; we show that $\alpha=\beta$. Suppose not, and let $B_k=A_0\cdots A_k$. Then there exists a strictly increasing sequence $k_0,k_1,\ldots$ such that $B_{k_l}*0$ converges, as $l\to\infty$, to one of $\alpha,\beta$, say to $\alpha$, and $B_{k_l}*\infty$ converges to $\beta$. Since the projective space $\PP\Mat_2\Rbb=\PP^3\Rbb$ is compact, we can find a subsequence $(k_{l_j})_{j\in\omega}$ such that $\lim_{j\to\infty}[B_{k_{l_j}}]=[C]$, for some $C\in(\Mat_2\Rbb)\setminus{0}$, square brackets denoting projective equivalence.
As $C*0=\alpha\not=\beta=C*\infty$, the matrix $C$ is nonsingular and determines an isometry of $\Hcal$, possibly orientation-reversing. Now, the point $C*i$ belongs to $\Hcal$, and is an accumulation point of the sequence $(B_{k_{l_j}}*i)_{j\in\omega}$. Since the $B_k$'s belong to $\Gamma^\pm$ and are all distinct, this contradicts the fact that $\Gamma^\pm$ acts on $\Hcal$ in a properly discontinuous way.
\end{proof}

\begin{definition}
Let\label{ref19} $T$ be a continuous quadratic Gauss map as in Definition~\ref{ref8}. A \newword{decreasing block} is a word $b_0\cdots b_u\in\Afrak^*$, of length $u+1\ge2$, such that:
\begin{itemize}
\item[(i)] $b_0,b_u\not=r$;
\item[(ii)] $I_{b_u}$ is contained in the closure of $E^\sharp(A)$, where $A=A_{b_0}\cdots A_{b_{u-1}}=\bbmatrix{a}{b}{c}{d}$;
\item[(iii)] if $A$ contains a $0$ entry (this happens if and only if $\det A_1=+1$ and $A$ is a power of $A_1$, or $\det A_1=-1$ and $A=A_1$) and $\beta\in\partial E^\sharp(A)\cap\partial I_{b_u}$, then
\[
t(A)\bigl(\abs{a'\beta'+b'}\lor\abs{c'\beta'+d'}\bigr)>\abs{\beta'}\lor1.
\]
\end{itemize}
A set $\Bcal$ of decreasing blocks is \newword{complete} for $T$ if:
\begin{itemize}
\item[(iv)] $\det A_1=-1$ and every $\abf\in\Afrak^\omega$ that does not start with $r$, and does not end with $\overline{r}$, begins with a block in $\Bcal$;
\item[(v)] $\det A_1=+1$ and every $\abf\in\Afrak^\omega$ that does not start with $r$, and does not end either with $\overline{r}$ or with $\overline{1}$, begins with a block in $\Bcal$.
\end{itemize}
\end{definition}

We can at last formulate property~(H) in a precise way.

\begin{theorem}
Each\label{ref20} of the algorithms introduced in \S\ref{ref17}, \S\ref{ref6}, \S\ref{ref18} admits a complete set of decreasing blocks.
\end{theorem}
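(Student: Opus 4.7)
The plan is, for each of the nine algorithms of §§\ref{ref17}--\ref{ref18}, to exhibit an explicit finite-description family $\mathcal{B}$ of finite words and verify the two clauses of Definition~\ref{ref19}. The guiding observation is that in each algorithm $A_r=\bbmatrix{1}{1}{}{1}$ is parabolic fixing $\infty$, and in the cases with $\det A_1=+1$ the matrix $A_1$ is also parabolic (fixing $0$); these are the ``parabolic'' letters, which produce no genuine hyperbolic motion. A symbolic sequence satisfying the hypotheses of Definition~\ref{ref19}(iv)--(v) therefore contains infinitely many non-parabolic letters, and each such sequence starts with a prefix of the form
\[
b_0\, s_1\, s_2\, \cdots\, s_k\, b_u,
\]
where $b_0, b_u$ are non-parabolic letters and the $s_i$ are parabolic. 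The set $\mathcal{B}$ will consist of all such words with $k$ in an explicit finite range; completeness (iv)--(v) is then essentially automatic.

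To verify that each candidate is actually a decreasing block, set $A = A_{b_0} A_{s_1} \cdots A_{s_k} A_{b_{u-1}}$, so that $b_u$ is the final letter and $A$ is the preceding product. By Theorem~\ref{ref13}(1), checking $I_{b_u} \subseteq \overline{E^\sharp(A)}$ reduces to verifying $f_A(x)\ge t(A)$ at the two endpoints of $I_{b_u}$. A direct computation shows that as $k$ grows the entries of $A$ grow linearly, with the growth controlled by a single column of $A_{b_0}$; in each invariant trace field $K=\Qbb(\sqrt d)$ with $d\in\set{2,3,5,6}$, the growth and the Galois-conjugate growth are computed explicitly, and the target inequality ultimately holds because, on an interval $I_{b_u}$ bounded away from the relevant parabolic fixed point, $\abs{a'}+\abs{c'}$ and $\abs{b'}+\abs{d'}$ do not outgrow $f_A$. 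Hence only finitely many small values of $k$ remain to be inspected by hand.

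The chief obstacle is the size of the case analysis rather than any conceptual difficulty: nine commensurability classes, in each a handful of pairs $(b_0,b_u)$, and for each pair a small range of $k$. Two simplifications keep matters tractable. First, the $x\leftrightarrow1/x$ symmetry exploited inside the proof of Theorem~\ref{ref13} roughly halves the pairs. Second, the subtler condition~(iii) of Definition~\ref{ref19} is triggered only when $A$ itself has a zero entry, which happens exactly when the initial segment is a pure power of a parabolic generator (i.e.~$b_0$ parabolic, and the block consists in dwelling at a cusp before moving away); in those cases the boundary point $\beta\in\partial E^\sharp(A)\cap\partial I_{b_u}$ is explicit, and the exclusion $A*\beta\ne 1$ required by the strict-positive clause of Theorem~\ref{ref13}(4) is checked by direct inspection of the unimodular partition. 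Once these finite verifications are performed, property~(H) follows for every noncocompact triangle group of arithmetic dimension~$2$, and with it the advertised termination of $T$-orbits in $K$ at parabolic fixed points via the Northcott finiteness theorem.
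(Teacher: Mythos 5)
Your overall strategy coincides with the paper's: exhibit, for each of the nine maps, an explicit family $\Bcal$ described by finitely many word-patterns and verify Definition~\ref{ref19} by computing the sets $E^\sharp$ of Theorem~\ref{ref13}, using the row/column symmetry to cut the work and invoking clause~(iii) only when the initial segment is a power of a parabolic generator. But there is a genuine gap in the structure you impose on the blocks. You claim that $\Bcal$ may be taken to consist of words $b_0s_1\cdots s_kb_u$ with exactly two non-parabolic letters, one at each end, separated by a run of parabolic letters, and that for $k$ large the inclusion $I_{b_u}\subseteq\overline{E^\sharp(A_{b_0}A_{s_1}\cdots A_{s_k})}$ holds automatically because the conjugate quantities ``do not outgrow $f_A$''. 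This is false. In the $(3,6,\infty)$ case take $b_0=2$, $s_i=10$ (the parabolic letter) and $b_u=9$: one finds $E^\sharp(A_2A_{10}^k)=[0,\xi(k))$ with $\xi(k)$ decreasing to $(3+\sqrt3)/2$, which lies in the \emph{interior} of $I_9=[\sqrt3,1+\sqrt3]$. Both $\max f_A$ on $I_9$ and $t(A)$ grow linearly in $k$, but with constants such that $f_A\ge t(A)$ fails on the right part of $I_9$ for every $k\ge17$. Hence $2(10)^k9$ is \emph{not} a decreasing block for $k\ge17$, and your $\Bcal$ either contains non-decreasing words (if you keep all $k$) or fails completeness (if you bound $k$, since a sequence beginning $2(10)^{17}9\cdots$ then begins with no block of $\Bcal$). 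The same phenomenon, not an edge case, governs most of the nine algorithms.

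The repair is to admit blocks containing more than two non-parabolic letters: for the example above one must pass to $2(10)^k9(10)^*\set{10}^c$ for $k\ge17$, and in the $(l,\infty,\infty)$ and $(4,m,\infty)$ cases, where both $A_1$ and $A_r$ are parabolic, one needs patterns such as $611^*\set{2,7}7^*\set{7}^c$ or $(12)\set{6,7}1^*(15)(15)^*\set{15}^c$, with several non-parabolic letters interleaved between two distinct parabolic runs. There is no uniform asymptotic principle that dispenses with these longer blocks; each family has to be discovered and certified by computing the relevant $E^\sharp$'s, which is what the paper's case-by-case lists do. Your handling of clause~(iii) and of the row/column symmetry is in the right spirit, but as written the argument does not establish the theorem.
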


The content of Theorem~\ref{ref20} is the following: choose any symbolic sequence $\abf\in\Afrak^\omega$ that does not end in $\overline{r}$ (nor in $\overline{1}$, if $\det A_1=+1$). Then, by Theorem~\ref{ref20}, there exists an infinite sequence of times $0\le t_0<t_1<t_2\cdots$ such that each block $a_{t_k}\cdots a_{t_{k+1}}$ is decreasing. Choose any initial very long finite subword $w$ of $\abf$, and let $\alpha$ be any element of $K$ having a $T$-symbolic orbit that starts with $w$; let $h=\max\set{l:t_l<\text{length}(w)}$.
Then the following fact is true.

\paragraph{\emph{Claim}} For every $0\le k<h$, the strict inequality
\[
H\bigl(T^{t_k}(\alpha)\bigr)>
H\bigl(T^{t_{k+1}}(\alpha)\bigr),
\]
holds, with the only possible exception of the last index $k=h-1$, where we may have equality; however, this exception takes place if and only if $T^{t_{h-1}}(\alpha)=1$, and forces $T^{t_h}(\alpha)=0$.
\paragraph{\emph{Proof of Claim}} 
Let us fix a decreasing block $b_0\cdots b_u$ 
and $\gamma\in K$ having a symbolic orbit that starts with 
$b_0\cdots b_u$. Let $A=A_{b_0}\cdots A_{b_{u-1}}$ and 
$\beta=T^u(\gamma)$; then $\beta\in I_{b_u}$ and $\gamma=A*\beta$.
By the definition of decreasing block, $\beta$ is in the closure of $E^\sharp(A)$. 
If $\beta\in E^\sharp(A)$, or ($\beta\in\partial E^\sharp(A)$, $A$ is strictly positive, and $A*\beta\not=1$), then $H(A*\beta)>H(\beta)$ by Theorem~\ref{ref13}(4). If $\beta\in\partial E^\sharp(A)$ then $f_A(\beta)=t(A)$; also, by Definition~\ref{ref19}(ii), $\beta\in\partial I_{b_u}$. If $A$ contains a $0$ entry then, by Definition~\ref{ref19}(iii),
\[
f_A(\beta)\bigl(\abs{a'\beta'+b'}\lor\abs{c'\beta'+d'}\bigr)>\abs{\beta'}\lor1,
\]
which is~\eqref{eq3} in the proof of Theorem~\ref{ref13}; thus,
$H(A*\beta)>H(\beta)$ again. In the exceptional case in which $\beta\in\partial E^\sharp(A)$, $A$ is strictly positive, and $A*\beta=1$, we have $H(\gamma)\ge H(\beta)$ anyway, so $H(A*\beta)=H(\beta)=1$ and $\beta\in\set{0,1,\infty}$. Since $b_u\not=r$, the case $\beta=\infty$ is excluded. If $\beta$ were equal to~$1$, then $1$ would be fixed 
by a strictly positive matrix in $\Gamma$, namely $A$ or $A^2$.
It is easy to show that this contradicts our assumption that $1$ is a cusp of $\Gamma$; we thus conclude $\beta=0$. Note that the exceptional case may occur only at the final decreasing block in $w$, because as soon as $T^t(\alpha)$ reaches value $0$, it can never assume value $1$ anymore. Our claim is thus established. \qed

We now state formally the geometric consequence of property~(H) sketched in~\S\ref{ref1}.
Let $X$ be a subset of $\PP^1\Rbb$, of cardinality $\sharp X$ at least~$3$. We let $\CR(X)$ be the set of all cross-ratios $(\alpha_1,\alpha_2;\alpha_3,\alpha_4)$, for $\alpha_i\in X$ and $\sharp\set{\alpha_1,\alpha_2,\alpha_3,\alpha_4}\ge3$;
we have $\CR(X)\subseteq\PP^1\Rbb$, and $(\alpha_1,\alpha_2;\alpha_3,\alpha_4)\in\set{0,1,\infty}$ if and only if $\sharp\set{\alpha_1,\alpha_2,\alpha_3,\alpha_4}=3$.

\begin{theorem}
Let~\label{ref22} $\Gamma$ be a 
noncocompact triangle group with quadratic invariant trace field~$F_2$.
Then $\CR(\text{cusps of $\Gamma$})=\PP^1F_2$.
\end{theorem}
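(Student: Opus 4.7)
The plan is to reduce via commensurability to one of the algorithmic groups $\Gamma_T$ constructed in \S\ref{ref3}, to show $\text{cusps}(\Gamma_T) = \PP^1 F_2$ using property~(H), and to derive the cross-ratio identity from this. By the classification in Figure~\ref{fig2}, every noncocompact triangle group with quadratic invariant trace field $F_2$ is commensurable, after a suitable $\PSL_2^\pm\Rbb$-conjugation, to one of the $\Gamma_T$ of \S\ref{ref3}. Since the cusp set, the invariant trace field, and the cross-ratio set are all commensurability invariants (cusps trivially; $F_2$ by \cite[Theorem~3.3.4]{maclachlanreid03}; $\CR$ by the $\PGL_2\Rbb$-invariance of cross-ratio), I may assume $\Gamma = \Gamma_T$, embedded so that $\Gamma \subset \PSL_2 F_2$.

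For the equality $\text{cusps}(\Gamma_T) = \PP^1 F_2$: the inclusion ``$\subseteq$'' is standard, since any parabolic $M \in \Gamma_T$ has $M^2 \in \Gamma_T^2 \subset \PSL_2 F_2$ fixing the same point, which therefore lies in $\PP^1 F_2$. For the reverse, given $\alpha \in F_2$, I would iterate the parabolic $A_r \in \Gamma_T$ fixing $\infty$ (of the form $\bbmatrix{1}{b}{}{1}$ with $0 < b \in \Obb$, present in each construction of \S\ref{ref3}) to translate $\alpha$ into $[0,\infty]$; say $\alpha + nb \in [0,\infty]$. Property~(H) of Theorem~\ref{ref20} then forces the $T$-orbit of $\alpha + nb$ to eventually reach some parabolic fixed point $c \in \{0,1,\infty\}$ of $T$. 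Since each step of $T$ inverts some $A_j \in \Gamma_T^\pm$, the entire orbit lies in the $\Gamma_T^\pm$-orbit of $c$, which, $\Gamma_T$ being normal in $\Gamma_T^\pm$, consists of cusps of $\Gamma_T$. Hence $\alpha + nb$ is a cusp, and translating back by $A_r^{-n}$ shows that $\alpha$ is a cusp as well. Together with $\infty \in \text{cusps}(\Gamma_T)$, this yields $\PP^1 F_2 \subseteq \text{cusps}(\Gamma_T)$.

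The cross-ratio identity now follows routinely. The inclusion $\CR(\text{cusps}) \subseteq \PP^1 F_2$ holds because $F_2 \cup \{\infty\}$ is closed under cross-ratio operations. For the reverse, any $\lambda \in F_2 \setminus \{0,1\}$ is realized as the cross-ratio $(\infty, 0; 1, \lambda) = \lambda$ of four distinct cusps, while the three exceptional values $\{0, 1, \infty\}$ arise as cross-ratios of triples of distinct cusps via the standard degenerate formulas. The main obstacle I expect lies not in the proof proper --- once Theorem~\ref{ref20} is granted, the argument is direct --- but in the opening bookkeeping: ensuring that under the chain of commensurabilities and normalizations, the cusp set, $\PP^1 F_2$, and the cross-ratio set all align coherently, so that proving the identity for the specific algorithmic group $\Gamma_T$ genuinely suffices to establish it for the original $\Gamma$.
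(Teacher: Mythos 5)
Your proposal is correct and follows essentially the same route as the paper: reduce to one of the algorithmic groups $\Gamma_T$ by commensurability invariance of the cross-ratio set, use a power of the parabolic $A_r$ to move $\alpha\in\PP^1F_2$ into $[0,\infty]$, and invoke Theorem~\ref{ref20} together with Northcott to force the $T$-orbit into a cusp. The only cosmetic differences are that you pass from $\Gamma_T^\pm$ to $\Gamma_T$ via normality where the paper postcomposes with a determinant~$-1$ element fixing the endpoint, and you should make the appeal to Northcott explicit, since property~(H) alone only gives strictly decreasing heights along blocks.
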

\begin{proof}
The set $\CR(\text{cusps of $\Gamma$})$ is a commensurability invariant of~$\Gamma$; indeed, it does not change either by conjugating~$\Gamma$ (by the invariance of cross-ratios under projective transformations), or by passing to subgroups of finite index (by the remark at the beginning of~\S\ref{ref2}). We can then assume that~$\Gamma$ equals $\Gamma_T$, for precisely one of our Gauss maps~$T$.
By construction, all matrices in~$\Gamma_T$ have entries in the integer ring of~$F_2$, and this easily implies that every cusp of~$\Gamma_T$ is in~$\PP^1F_2$. For the reverse inclusion, let $\alpha\in\PP^1F_2$. By applying a power of~$A_r$ (the parabolic matrix in Definition~\ref{ref8}(v)) to~$\alpha$, we may assume~$\alpha\in[0,\infty]$. By Theorem~\ref{ref20} and the Northcott property, there exists a product~$A$ of $A_1\m,\ldots,A_r\m\in\Gamma_T^\pm$ such that $A*\alpha$ equals~$0$ or~$\infty$. Postcomposing~$A$, if necessary, with an element of $\Gamma_T^\pm$ that fixes $A*\alpha$ and has negative determinant, we may assume $A\in\Gamma_T$. As $0$ and $\infty$ are cusps of $\Gamma_T$, so is $\alpha$.
\end{proof}

\begin{remark}
As noted in~\S\ref{ref1}, Theorem~\ref{ref22} was proved by Leutbecher for the four quadratic Hecke groups, and reestablished by McMullen~\cite[Theorem~A.2]{mcmullen03}, using the Veech theory, for three more groups, namely~$\Delta(3,4,\infty)$, $\Delta(3,5,\infty)$, $\Delta(3,6,\infty)$; these three groups had been previously examined by the Leutbecher school~\cite{berg85}, \cite{seibold85}.
As pointed out by the referee, \cite[Theorem~A.1]{mcmullen03} and the
further realizations of Veech groups in~\cite{bouwmoller10} yield the full result.
\end{remark}

Theorem~\ref{ref20} is proved by direct verification.
One computes $E^\sharp(A_a)$, for each $a=1,\ldots,r-1$;
whenever its closure contains some $I_b$, with $b\not=r$, the block $ab$ is decreasing (for $a=1$ boundary points must be checked according to Definition~\ref{ref19}(iii)). If $I_b$ is not contained in the closure of $E^\sharp(A_a)$, then one computes $E^\sharp(A_aA_b)$ and repeats the process. After some practice a good candidate for $\Bcal$ is readily reached, and checking that indeed it constitutes a complete set of decreasing blocks is just a patience task. The simplest cases are those in which the intervals are evenly spread around~$1$, namely those in \S\ref{ref17}. The cases in~\S\ref{ref6} are slightly more involved, 
due to the presence of a second parabolic fixed point at $0$ (note that~$\infty$ and $0$ do not play a symmetric r\^ole).
The case $(4,6,\infty)$ is the most difficult one, since all intervals $I_1,\ldots,I_{14}$ cumulate in $[0,1]$, and several $E^\sharp$'s are small. Despite the larger number of intervals, case $(4,12,\infty)$ is actually simpler, because intervals are more evenly spread, and the central ones behave in the same way. We start from the simplest case.

\subsection{Case $(2,5,\infty)$} The matrices are
\[
A_1=\begin{bmatrix}
&1\\
1&\tau
\end{bmatrix},\quad
A_2=\begin{bmatrix}
\tau&1\\
\tau&\tau
\end{bmatrix},\quad
A_3=\begin{bmatrix}
\tau&\tau\\
\tau&1
\end{bmatrix},\quad
A_4=\begin{bmatrix}
1&\tau\\
&1
\end{bmatrix},
\]
and the $4$ intervals have endpoints $0<\tau\m<1<\tau<\infty$.
Let $\Bcal$ be the family of all blocks of the form $a4^kb$, where $a,b\not=4$ and $k\ge0$.
For convenience, we introduce shorthands for block families; notations such as $\set{a,b}^c\,c\set{d,e}f^*\set{f,g}^c$ will denote the set of all blocks of the form $xcyf^kz\in\Afrak^*$ such that $x\in\set{a,b}^c=\Afrak\setminus\set{a,b}$, $y\in\set{d,e}$, $z\in\set{f,g}^c$, and $k\ge0$. The completeness of the above $\Bcal$ (which, in the previous notation, is $\set{4}^c4^*\set{4}^c$)
is clear: every $\abf\in\set{1,2,3,4}^\omega$ that does not start with $4$ and does not end with $\overline{4}$ ($\det A_1=-1$ here) must begin with a block in $\Bcal$.
For $k\ge0$ we have
\[
A_1A_4^k=\begin{bmatrix}
&1\\
1&(k+1)\tau
\end{bmatrix},\quad
A_2A_4^k=\begin{bmatrix}
\tau&(k+1)+k\tau\\
\tau&k+(k+1)\tau
\end{bmatrix}.
\]

We compute $t(A_1A_4^k)=1+(k+1)\tau\m$, $E^\sharp(A_1)=(0,\tau^2)$, and $E^\sharp(A_1A_4^k)=[0,\tau^2)$ for $k\ge1$. We must make sure that the boundary point $0\in\partial E^\sharp(A_1)\cap\partial I_1$
satisfies Definition~\ref{ref19}(iii), which is indeed the case since $t(A_1)\bigl(\abs{a'0+b'}\lor\abs{c'0+d'}\bigr)=(1+\tau\m)(1\lor\abs{\tau'})=1+\tau\m>1$. Since $\tau<\tau^2$, we have $I_1\cup I_2\cup I_3=[0,\tau]\subset[0,\tau^2]=(\text{closure of $E^\sharp(A_1A_4^k)$})$, and all blocks of the form $14^*\set{4}^c$ are decreasing. 

The number $t(A_2A_4^k)$ has value $\tau$ for $k=0,1$, and value $(2k+1)(2-\tau)$ for $k\ge2$. We have $E^\sharp(A_2)=(0,\xi(0))$ and $E^\sharp(A_1A_4^k)=[0,\xi(k))$ for some number $\xi(k)$ which we do not need to compute since it is surely greater than $\tau$; indeed, $f_{A_2A_4^k}(\tau)=\tau+k/\tau+(k+1)$, which is strictly greater than $t(A_2A_4^k)$ for every $k$. Again, $0\in\partial E^\sharp(A_2)\cap \partial I_1$, so we must check the inequality in Definition~\ref{ref19}(iii), i.e., that
$t(A_2)\bigl(1\lor\abs{\tau'}\bigr)>1$, which is the case. We conclude that all blocks $24^*(4)^c$ are decreasing. Since $A_3A_4^k$ is obtained from $A_2A_4^k$
by exchanging rows and this operation, as remarked in the proof of Theorem~\ref{ref13}, leaves $t$ and $f$ unaltered, all blocks $34^*\set{4}^c$ are decreasing as well. This concludes the proof of Theorem~\ref{ref20} for the map of case $(2,5,\infty)$.

The particular form of the blocks $\set{4}^c4^*\set{4}^c$ implies that, starting with any input in $[0,\tau]\cap\Qbb(\tau)$ and waiting until the $T$-orbit returns to $[0,\tau]$ guarantees that the height decreases strictly. Thus, the \newword{first return map} induced by $T$ on $[0,\tau]$ (see Figure~\ref{fig6}) is strictly height-decreasing. In particular, 
the first-return orbit of every point in $K=\Qbb(\tau)$ is eventually undefined: we have to wait forever.
\begin{figure}[h!]
\includegraphics[width=6.5cm]{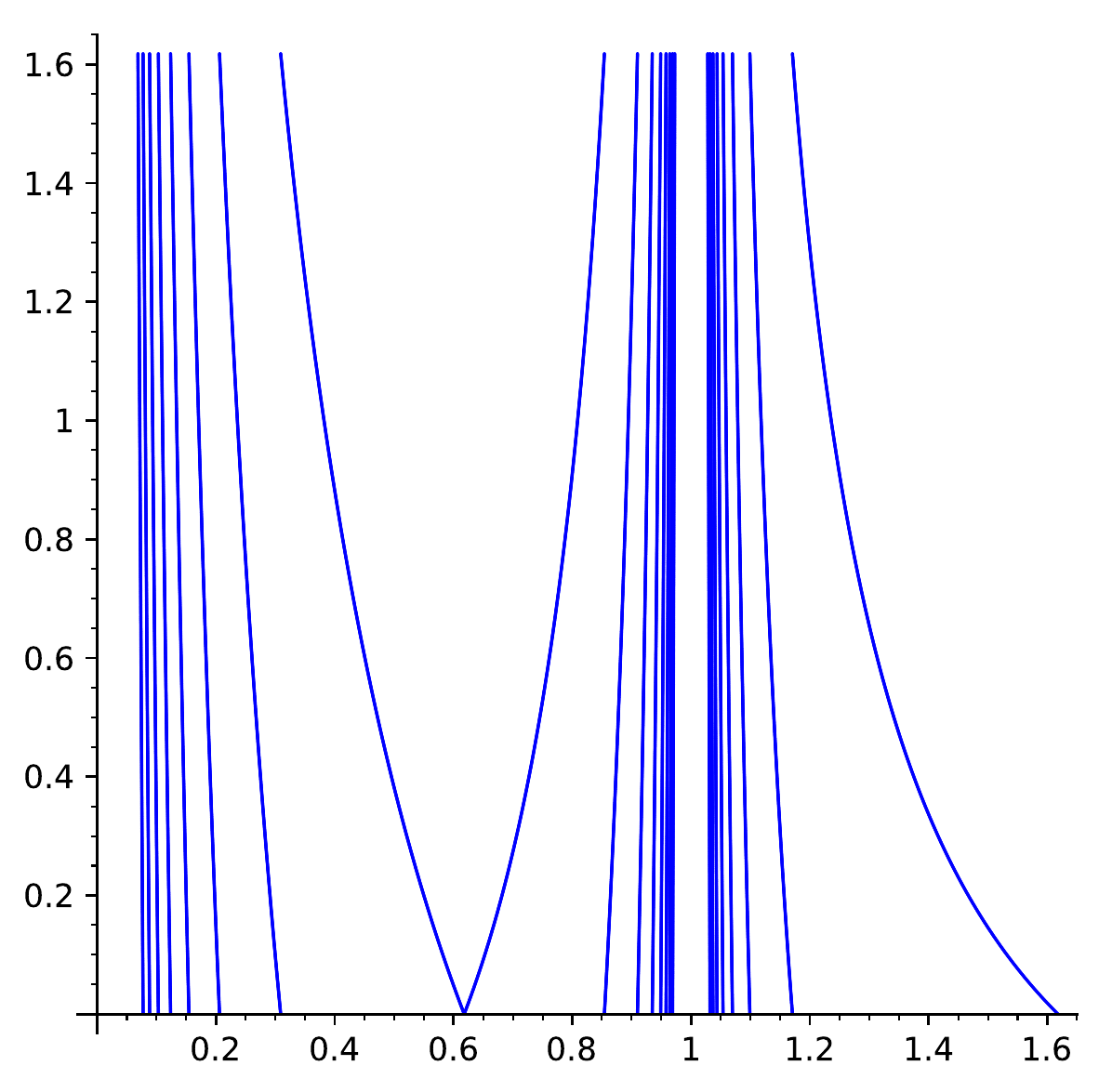}
\caption{The first-return map in the $(2,5,\infty)$ case.}
\label{fig6}
\end{figure}

\begin{remark}
In general, conjugating a map changes radically the structure of decreasing blocks. For example, conjugating the map $T$ above by $x\mapsto \tau\m x$ we obtain a Gauss map $T'$ for which the block $11$ is not decreasing any more.
\end{remark}

\subsection{Cases $(3,4,\infty)$ and $(3,5,\infty)$} 
These cases  are identical to case $(2,5,\infty)$. The set of all blocks of the form $\set{5}^c5^*\set{5}^c$  (respectively, $\set{7}^c7^*\set{7}^c$) is a complete family of decreasing blocks, and the map of first-return on the first $r-1$ intervals is strictly height-decreasing.

\subsection{Case $(3,6,\infty)$} 
This one is more involved. There are $10$ intervals, the rightmost two being $I_9=[\sqrt{3},1+\sqrt{3}]$ and $I_{10}=[1+\sqrt{3},\infty]$. The blocks $1(10)^*\set{10}^c$ are height-decreasing (we make appeal to the reader's patience for the extra parentheses, which are needed to avoid digit ambiguities).
Now, computing $E^\sharp$ for the matrix
\[
A_2A_{10}^k=\begin{bmatrix}
1&(k+1)+k\sqrt{3}\\
\sqrt{3}&(3k+1)+(k+1)\sqrt{3}
\end{bmatrix}
\]
yields an interval $[0,\xi(k))$. The number $\xi(k)$ is in $I_{10}$ for $k=0,\ldots,16$ (see Figure~\ref{fig4} (left)), but enters $I_9$ from $k=17$ on, converging monotonically from the right to $(3+\sqrt{3})/2$, which is in the topological interior of $I_9$. It follows that the blocks $2(10)^k\set{10}^c$ are indeed decreasing for $k\le16$, but no more so for higher $k$'s. We therefore include in $\Bcal$ all blocks of the form $2(10)^k\set{10}^c$ for $k\le16$ and of the form $2(10)^k9(10)^*\set{10}^c$ for $k\ge17$; computing the relative $E^\sharp$'s shows that the latter blocks are decreasing.
Identical considerations apply to blocks starting with the digits $a=3,\ldots,9$. Note that if we are satisfied with a coarser $\Bcal$ (i.e., a set of decreasing blocks which, albeit complete, is nonoptimal, as it contains blocks longer than necessary), then we may avoid computing the minimal $k$ corresponding to each $a$, and just put in $\Bcal$, in addition to $1(10)^*\set{10}^c$, all blocks of the form $\set{1,10}^c(10)^*\set{9,10}^c$ 
and $\set{1,10}^c(10)^*9(10)^*\set{10}^c$.

In case $(3,6,\infty)$, the first-return map to $I_1\cup\cdots\cup I_9=[0,1+\sqrt{3}]$ is \emph{not} height-decreasing. The point
\[
\alpha=\frac{-106298232+90029733\sqrt{3}}{-87526426+100742393\sqrt{3}}
\]
is in $I_2$ and has logarithmic height $18.8328\cdots$. It gets mapped to $I_{10}$ by $T$, and stays there for $16$ parabolic steps, returning to $I_9$ as
\[
\beta=A_2\m*\alpha-17(1+\sqrt{3})=
\frac{-284852703+186193328\sqrt{3}}{-18821875+18821875\sqrt{3}},
\]
which has logarithmic height $18.8341\cdots$.

We have thus proved Theorem~\ref{ref20} for the cases in~\S\ref{ref17}.
In the remaining cases both extremal intervals $I_1$ and $I_r$ correspond to a parabolic matrix; note again that in the construction of $\Bcal$ the r\^ole of the digits $1$ and $r$ is not symmetric.

\subsection{Cases $(4,\infty,\infty)$, $(5,\infty,\infty)$, and $(6,\infty,\infty)$} These are similar, albeit not completely overlapping. The alphabets are $\Afrak=\set{1,\ldots,r}$, with $r$ equal to $7$, $9$, and $11$, respectively. In each case we provide a list of block families whose union gives a complete set of decreasing blocks. The completeness of the sets is apparent, and the fact that the blocks are decreasing reduces to a tedious but straightforward computation of the relative~$E^\sharp$'s.
\begin{itemize}
\item $(4,\infty,\infty)$
\begin{gather*}
11^*\set{1,7}^c,\\
11^*77^*\set{7}^c,\\
\set{1,6,7}^c\,7^*\set{7}^c,\\
6\set{1,7}^c,\\
611^*\set{1,2,7}^c,\\
611^*\set{2,7}7^*\set{7}^c,\\
677^*\set{7}^c.
\end{gather*}
\item $(5,\infty,\infty)$
\begin{gather*}
11^*\set{1,2,9}^c,\\
11^*\set{2,9}9^*\set{9}^c,\\
\set{1,8,9}^c\,9^*\set{9}^c,\\
8\set{1,9}^c,\\
811^*\set{1}^c\,9^*\set{9}^c,\\
899^*\set{9}^c.
\end{gather*}
\item $(6,\infty,\infty)$
\begin{gather*}
11^*\set{1,11}^c,\\
11^*(11)(11)^*\set{11}^c,\\
\set{1,10,11}^c\,(11)^*\set{11}^c,\\
(10)\set{1,11}^c,\\
(10)11^*\set{1,11}^c,\\
(10)11^*(11)(11)^*\set{11}^c,\\
(10)(11)(11)^*\set{11}^c.
\end{gather*}
\end{itemize}

\subsection{Case $(4,12,\infty)$} This case involves the largest alphabet $\Afrak=\set{1,\ldots,33}$ but it is not difficult, since all digits from $2$ to $29$ behave in the same way.
\begin{gather*}
11^*\set{1,33}^c,\\
11^*(33)(33)^*\set{33}^c,\\
\set{1,30,31,32,33}^c\,(33)^*\set{33}^c,\\
(30)\set{1,33}^c,\\
(30)11^*(33)^*\set{33}^c,\\
(30)(33)(33)^*\set{33}^c,\\
\set{31,32}\set{1,2,3,33}^c,\\
\set{31,32}11^*\set{1,33}^c,\\
\set{31,32}11^*(33)(33)^*\set{33}^c,\\
\set{31,32}\set{2,3,33}(33)^*\set{33}^c.
\end{gather*}

\subsection{Case $(4,6,\infty)$} The alphabet is $\Afrak=\set{1,\ldots,15}$. This is the most difficult case, and the unique one in which a matrix, namely $A_{13}$ of Example~\ref{ref14}, has its~$E^\flat$ equal to all of $[0,\infty]$; see Figure~\ref{fig4} (right). The closure of the set of points in $K=\Qbb(\sqrt{6})$ on which $T$ is strictly increasing is also large, and includes all of~$I_{13}$; however, further applications of $T$ make up for these temporary increases. The following is a list of block families whose union provides a complete set of decreasing blocks.
\begin{gather*}
11^*\set{1,15}^c,\\
11^*(15)(15)^*\set{15}^c,\\
\set{2,3,6,7,11}1^*\set{1}^c(15)^*\set{15}^c,\\
\set{4,5,8,9}(15)^*\set{15}^c,\\
(10)\set{1,2,3,4,5,6,7,15}^c,\\
(10)11^*(15)(15)^*\set{15}^c,\\
(10)\set{2,3,4,5,6,7,15}(15)^*\set{15}^c,\\
(12)\set{13,14},\\
(12)11^*\set{1}^c(15)^*\set{15}^c,\\
(12)\set{2,3,4,5,8,9,10,11,12}(15)^*\set{15}^c,\\
(12)\set{6,7}1^*\set{1,15}^c,\\
(12)\set{6,7}1^*(15)(15)^*\set{15}^c,\\
(13)11^*\set{1}^c(15)^*\set{15}^c,\\
(13)\set{2,3,4,5,6,7,8,9,10,11,14,15}(15)^*\set{15}^c,\\
(13)\set{12,13}1^*\set{1,15}^c,\\
(13)\set{12,13}1^*(15)(15)^*\set{15}^c,\\
(14)\set{1,2,3,4,15}^c,\\
(14)\set{1,2,3}1^*\set{1,15}^c,\displaybreak[0]\\
(14)\set{1,2,3}1^*(15)(15)^*\set{15}^c,\\
(14)\set{4,15}(15)^*\set{15}^c.
\end{gather*}
This completes the proof of Theorem~\ref{ref20}.

\end{document}